\theoremstyle{plain}
\newtheorem{thm}{Theorem}[section]
\newtheorem{prop}[thm]{Proposition}
\newtheorem{cor}[thm]{Corollary}
\newtheorem{lem}[thm]{Lemma}
\theoremstyle{definition}
\newtheorem{exa}[thm]{Example}
\newtheorem{rem}[thm]{Remark}
\def\Ker{\mathop{\mathrm{Ker}}\nolimits}
\newcommand{\lra}{\longrightarrow}
\newcommand{\ra}{\rightarrow}
\newcommand{\R}{{\Bbb R}}
\newcommand{\Z}{{\Bbb Z}}
\begin{document}
\large
\begin{center}{\bf\Large Cocycles of nilpotent quotients of free groups}
\end{center}
\vskip 1.5pc
\begin{center}{\Large
Takefumi Nosaka\footnote{E-mail address: {\tt nosaka@math.titech.ac.jp}}}\end{center}
\vskip 1pc
\begin{abstract}\baselineskip=12pt \noindent

We focus on the cohomology of the $k$-th nilpotent quotient of the free group, $F/F_k$.
This paper describes all the group 2-, 3-cocycles in terms of Massey product,
and gives expressions for some of the 3-cocycles.
We also give simple proofs of some of the results on Milnor invariant and the Johnson-Morita homomorphisms.
\end{abstract}
\begin{center}
\normalsize
{\bf Keywords}: \ \ \ nilpotent group, higher Massey product, group cohomology, mapping class group, link \ \ \
\end{center}

\tableofcontents
\large
\baselineskip=15.5pt

\section{Introduction}
Let $F$ be the free group of rank $q$. We define $F_1$ to be $F$, and $F_k$ to be the commutator subgroup $ [F_{k-1} ,F ]$ by induction. Accordingly, we have the central extension,
\begin{equation}\label{kihon2} 0 \lra F_k/F_{k+1} \lra F/F_{k+1}  \xrightarrow{\ \ p_{k}\ \ }F/F_{k} \lra 1 \ \ \ \ \ (\mathrm{central \ extension})\end{equation}
The abelian kernel $F_k/F_{k+1} $ is classically known to be free and of finite rank (see, e.g., \cite{BC,MKS} and references therein). We denote the rank by $N_k \in \mathbb{N}$.

The nilpotent quotient $F/F_{k} $ have been studied with applications; see, e.g., \cite{Mas,BC,MKS} for the relation to the free Lie algebra $ \oplus_{i=1}^k F_k/F_{k+1}$. The group homology of $F/F_k$ also plays a fruitful role in the study of low dimensional topology, including the Milnor (link) invariant, the Johnson-Morita homomorphisms, and tree parts of the quantum invariant; see \cite{GL,Ki,IO,Heap,KN,Mas,Tur,Por}. Massey products of manifolds appear in nilpotent obstructions of manifolds, as described in \cite{CGO,FS,Ki,GL,Heap}. Moreover, 
the homology groups of degree 2 and 3 are computed as
\begin{equation}\label{kihon7} H_2( F/ F_k;\Z ) \cong \Z^{ N_k} , \ \ \ \ \ \ H_3( F/ F_k;\Z ) \cong \bigoplus_{i=k}^{2k-2}\Z^{ q N_i -N_{i+1}}.
\end{equation}
The former is a classical result; however, the latter was a result of Igusa and Orr \cite[Corollaries 5.5 and 6.5]{IO} shown by spectral sequences. In particular, 
it seems hard to deal with the third homology $H_3( F/ F_k;\Z )$ quantitatively.

In this paper, we describe bases of the cohomology $H^2( F/ F_k;\Z ) $ and $H^3( F/ F_k;\Z ) $ as Massey products (see Theorem \ref{maainthm}) and explicitly express some of the cocycles. 
Furthermore, we consider the Massey products to be an algorithm to produce many expressions of cocycles. In fact, Section \ref{So} gives 3-cocycles of some nilpotent groups, and concretely describes their expressions.
In doing so, it is reasonable to hope that the expressions may be useful for computing various things appearing in topology; including the Morita homomorphisms \cite{Morita2} and the Orr link invariants \cite{Orr}.
Incidentally, the geometric realization of \eqref{kihon2} is the iterated torus bundle; thus, in Appendix A, we can express the 2-cocycles as differential 2-forms in the de Rham complex; see Theorem \ref{32az}.

As corollaries, Section \ref{Sec3} provides simple proofs of four known results of \cite{FS,Por,Tur,Heap,Ki}, which are related to Massey products. The original proofs were discussed at the cohomology level, and they constructed Massey products according to circumstances. In contrast, since the above theorem ensures cocycles which gives a basis of $H^*( F/ F_k;\Z ) $ as Massey products, we will give shorter proofs in terms of cocycles.

This paper is organized as follows. Section 2 reviews Magnus expansions and Massey products, and Section 3 states our theorem with a proof. Section 4 explains the 
four known results mentioned above and alternative proofs based on our theorem in Section 3. Section 5 discusses an algorithm to produce cocycles.

\

\noindent
{\bf Conventional notation}. \ Throughout this paper, let $F$ be the free group of rank $q$, and let $N_k \in \Z$ be the rank of $F_k/F_{k+1}$. Given a group $G$, we denote $G$ by $G_1$ and define $G_k$ to be the commutator subgroup $ [G_{k-1} ,G ]$ by induction. For a group $G$, we write $BG$ for the Eilenberg-MacLane space, i.e., $K(G,1)$-space. Furthermore, we assume the basic properties of group (co)-homology as in \cite[Sections I, II, and VII]{Bro}.

\section{Review: Magnus expansions and higher Massey products}
\label{Semddb}
Let us begin by studying unipotent Magnus expansions and higher Massey products.

First, we review the Magnus expansion modulo degree $k$. Let $\Z \langle X_1,\dots, X_q\rangle $ be the polynomial ring with non-commutative indeterminates $X_1,\dots, X_q $, and $\mathcal{J}_{k}$ be the two-sided ideal generated by polynomials of degree $\geq k $. Then, {\it the Magnus expansion (of the free group $F$)} is the multiplicative map $ \mathcal{M}: F \ra \Z \langle X_1,\dots, X_q \rangle /\mathcal{J}_{k} $ defined by
\begin{equation}\label{mag} \mathcal{M}(x_i) = 1+X_i, \ \ \ \ \ \ \ \mathcal{M}(x_i^{-1}) = 1- X_i+X_i^2 + \cdots + (-1)^{k-1}X_i^{k-1}.
\end{equation}
As is known, $\mathcal{M} (F_k ) =0 $. By passage to this $F_k$, this $\mathcal{M}$ further induces the injection
$$ \mathcal{M}:F/ F_k \lra \Z \langle X_1,\dots, X_k \rangle /\mathcal{J}_{k}. $$

Next, we review another description of the Magnus embedding \cite{GG}, which is a faithful linear representation of $ F/F_k $. Let $\Omega_k $ be the polynomial ring $\Z[\lambda_{i}^{(j)}]$ over commuting indeterminates $ \lambda_{i}^{(j)}$ with $ i \in \{ 1,2, \dots, k-1 \}, \ j \in \{ 1, \dots, q\}$. We define the group homomorphism 
$$\Upsilon_k : F \lra GL_k(\Omega_k ), $$
by setting
$$ \Upsilon_k(x_j)= \left(
\begin{array}{cccccc}
1 & \lambda_{1}^{(j)} & 0& \cdots & 0\\
0 & 1& \lambda_{2}^{(j)} & \cdots & 0\\
\vdots & \vdots & \ddots & \ddots & \vdots \\
0&0 & \cdots & 1 & \lambda_{k-1}^{(j)} \\
0& 0& \cdots & 0 & 1
\end{array}
\right).
$$
As is known \cite{GG}, the image $\Upsilon_k ( F_k)$ consists of the identity matrix, and the quotient map $ F/F_{k} \ra GL_k( \Omega_k ) $ is injective. Thus, we have an isomorphism $F/F_{k} \cong \mathrm{Im}(\Upsilon_k). $ It is shown \cite[Appendix]{KN} that the correspondence $1+ X_j \mapsto \Upsilon_k(x_j) $ gives the equivalence beteween  the former $\mathcal{M} $ and $\Upsilon_k $. The longer the word of $x$ is, the harder the computation of $\mathcal{M} (x)$ is; however, that of $\Upsilon_k (x)$ is still simpler. Indeed, the map $\Upsilon_k $ is defined over the commutative ring $\Omega_k $, and is therefore compatible with computer programs.

Next, we review the higher Massey products, which were first defined by Kraines \cite{Kra}. Here, we describe the products in the non-homogenous complex of a group $G$ with a trivial coefficient ring $A$. That is, as in \cite[Chap. III.1]{Bro}, we define the group $C^{*} (G;A )$ of cochains to be $\mathrm{Map}(G^n,A)$ and the coboundary map $\partial_n^*$ by setting
\[(\partial_n^* f ) ( g_1, \dots ,g_{n}) = f( g_2, \dots, g_n )+ (-1)^n f ( g_1, \dots ,g_{n-1}) \]
\[ \ \ \ \ \ - f (g_1 g_2 , g_3, \dots, g_n) + f (g_1 ,g_2 g_3, g_4 \dots, g_n) + \cdots + (-1)^{n-1} f ( g_1, \dots ,g_{n-1}g_n) . \]
Furthermore, the cup product on $C^n (G;A) $ can be described as a canonical product. More precisely, for $u \in C^p( G;A )$ and $v \in C^{q}( G;A )$, the product $u \smile v \in C^{p+q}( G;A )$ is defined by
$$ ( u \smile v) ( g_1 ,\dots, g_{p+q}):= (-1)^{pq} u (g_1 ,\dots, g_{p} ) \cdot v (g_{p+1} ,\dots, g_{p+q} ) \in A .$$

For any $1 \leq i \leq n$, take a cocycle $ \gamma_i \in C^{p_i} (G ;A )$. Then, a {\it defining system} associated with $(\gamma_1,\dots, \gamma_n)$ is a set of elements $(a_{s,t})$ for $1 \leq s \leq t\leq n$ with $(s,t) \neq (1, n)$, satisfying

\begin{enumerate}[(i)]
\item $a_{s,t} \in C^{p_s +p_{s+1}+ \cdots + p_{t} -t+s} (G;A ) $.
\item When $s=t$, the diagonal map $a_{s,s}$ is equal to $ \gamma_s$ in $C^{p_s} (G;A )$.
\item $ \partial^* ( {a_{s,t}})= \sum_{r=s}^{t-1} (-1)^{p_s +p_{t+1}+ \cdots + p_{t} -t+s} a_{s,r}\smile a_{r+1,t}.$
\end{enumerate}
Given such a defining system, we can define a cocycle of the form,
\begin{equation}\label{kihon4} \sum_{r : \ 1 \leq r \leq n-1} (-1)^{p_1 +p_{2}+ \cdots + p_{r} -r+1} a_{1,r}\smile a_{r+1,n} \in
C^{p_1 +p_{2}+ \cdots + p_{n } -n+2} (G;A ) .
\end{equation}
Following \cite{Kra}, the {\it $n$-fold Massey product}, $\langle \gamma_1, \gamma_2,\dots, \gamma_n\rangle $, is defined to be the set of cohomology classes of cocycles associated with all possible defining systems. While there are many interpretations of the higher Massey product, this paper uses the Massey products as a method to yield cocycles from other cocycles of lower degree.

\begin{rem}\label{pp2}
As is known \cite{FS}, if $p_1=p_2 = \cdots =p_n=1 $ and every $m$-fold Massey products with $m<n$ chosen from $ \{\gamma_1, \dots, \gamma_n \}$ is null-cohomologous, the {\it $n$-fold Massey product} $\langle \gamma_1, \gamma_2,\dots, \gamma_n\rangle $ is a singleton in $H^2(G;A)$.
\end{rem}

\section{Main theorem: generators of $H^* (F/F_k) $}
\label{Sbra1}
We will describe bases of some cohomology of $F/F_k$ in terms of Massey products (Theorem \ref{maainthm}).

Before stating the theorem, we should concretely define some defining systems. Let
$F$ be the free group with basis, $x_1, \dots, x_q$,
and $G$ be $ F/F_k. $ For $ 1 \leq t \leq q$,
let $\alpha_{t} : F/F_k \ra \Z$ be the homomorphism which sends $x_s$ to $\delta_{s,t} $.
We regard $\alpha_t$ as a 1-cocycle of $ F/F_k $. Then, given a $k$-tuple $I=(i_1 , \dots, i_{k}) \in \{ 1,2, \dots, q\}^{k} $, we have 1-cocycles $ \alpha_{i_1} ,\dots, \alpha_{i_k} $. Furthermore, for $1 \leq s \leq t\leq k$, let us consider the evaluation of the coefficient of $X_{i_s} \cdots X_{i_t}. $ That is, we set up the linear map,
\begin{equation}\label{mag1231} \beta_{i_s i_{s+1} \cdots i_t}: \Z \langle X_1 , \dots, X_q \rangle \lra \Z; \ \ \ \ \ \ \sum a_{ j_1 \dots j_a} X_{j_1} \cdots X_{j_a} \longmapsto a_{ i_s i_{s+1}\dots, i_t}.
\end{equation}
Let us denote the composite $ \beta_{i_s \cdots i_t} \circ \mathcal{M}$ by $c_{i_s \cdots i_t}$, which we will use many times.

\begin{lem}\label{maainlem}
Consider the case $p_1=p_2 = \cdots =p_n=1 $ with $n=k $. Let $a_{s,t}: F \ra \Z$ be the composite $c_{i_s \cdots i_t} $. Then, the set of $(a_{s,t})$ is a defining system associated with $(\alpha_{i_1} , \dots, \alpha_{i_k} )$. In particular, the resulting 2-cocycle is represented by
\begin{equation}\label{mag139} F/F_k \times F/F_k \lra \Z ; \ \ \ (x,y) \longmapsto \sum_{\ell: \ 1\leq \ell \leq k-1} c_{i_1 i_{2}\cdots i_\ell} (x) c_{i_{\ell+1} \cdots i_k} (y) .
\end{equation}
\end{lem}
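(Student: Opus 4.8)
The plan is to deduce everything from one algebraic fact — multiplicativity of the Magnus expansion $\mathcal{M}$ — together with the explicit cochain-level formulas for $\partial^*$ and $\smile$ fixed in Section~\ref{Semddb}. Write $\mathcal{M}(x)=1+\sum_{J}c_J(x)\,X_J$, the sum over nonempty multi-indices $J=(j_1,\dots,j_a)$ with $a\le k-1$ and $X_J:=X_{j_1}\cdots X_{j_a}$, so that $c_J=\beta_J\circ\mathcal{M}$. Since $\mathcal{M}$ trivializes $F_k$ modulo $\mathcal{J}_k$, each such $c_J$ descends to a function $F/F_k\to\Z$, i.e. an element of $C^1(F/F_k;\Z)$; and because the constraint $(s,t)\ne(1,n)$ forces $|(i_s,\dots,i_t)|=t-s+1\le k-1$, every $a_{s,t}=c_{i_s\cdots i_t}$ is a genuine $1$-cochain of $F/F_k$. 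As $p_s+p_{s+1}+\cdots+p_t-t+s=(t-s+1)-t+s=1$, this is exactly the cochain degree demanded by~(i).

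First I would record the ``comultiplication'' identity obtained by expanding $\mathcal{M}(xy)=\mathcal{M}(x)\mathcal{M}(y)$ and reading off the coefficient of the monomial $X_{i_s}\cdots X_{i_t}$: a length-$(t-s+1)$ monomial occurring in such a product is contributed either by one of the two factors alone or by a unique splitting into a nonempty prefix coming from $\mathcal{M}(x)$ and a nonempty suffix coming from $\mathcal{M}(y)$, whence
\[
c_{i_s\cdots i_t}(xy)=c_{i_s\cdots i_t}(x)+c_{i_s\cdots i_t}(y)+\sum_{r=s}^{t-1}c_{i_s\cdots i_r}(x)\,c_{i_{r+1}\cdots i_t}(y)
\]
for all $x,y\in F$ and all $(s,t)\ne(1,n)$. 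Specializing to $s=t$ this says $c_{i_s}$ is additive, and since $\mathcal{M}(x_j)=1+X_j$ it is literally the $i_s$-th coordinate $\alpha_{i_s}$ of the abelianization; in particular $a_{s,s}$ is a cocycle representing $\alpha_{i_s}$, which is~(ii).

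Next I would verify~(iii). Using the coboundary formula $(\partial^* f)(x,y)=f(x)+f(y)-f(xy)$ on $f=a_{s,t}=c_{i_s\cdots i_t}$ and inserting the identity above, the two linear terms cancel and one is left with $(\partial^* a_{s,t})(x,y)=-\sum_{r=s}^{t-1}c_{i_s\cdots i_r}(x)\,c_{i_{r+1}\cdots i_t}(y)$; rewriting each summand via $(a_{s,r}\smile a_{r+1,t})(x,y)=(-1)^{1\cdot1}a_{s,r}(x)\,a_{r+1,t}(y)$ exhibits the right-hand side of~(iii). Thus $(a_{s,t})$ is a defining system for $(\alpha_{i_1},\dots,\alpha_{i_k})$. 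It then remains to substitute this defining system into~\eqref{kihon4} with $n=k$ and $p_1=\cdots=p_k=1$; the prefactor $(-1)^{p_1+\cdots+p_r-r+1}$ collapses to $-1$, giving the $2$-cocycle $-\sum_{r=1}^{k-1}a_{1,r}\smile a_{r+1,k}$, and evaluating at $(x,y)$ with $(u\smile v)(x,y)=-u(x)v(y)$ turns it into $\sum_{\ell=1}^{k-1}c_{i_1\cdots i_\ell}(x)\,c_{i_{\ell+1}\cdots i_k}(y)$, which is precisely~\eqref{mag139}.

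There is no conceptual obstacle: the engine is the uniqueness of the prefix/suffix decomposition of a monomial in a product, which makes the comultiplication identity immediate, and everything else is substitution. What genuinely needs care is the sign bookkeeping — matching the coboundary, the factor $(-1)^{pq}$ in the cup product, and the sign exponents appearing in~(i)--(iii) and~\eqref{kihon4} so that they cancel consistently — together with keeping every $c_J$ in the range $|J|\le k-1$, so that one never steps outside the truncated algebra $\Z\langle X_1,\dots,X_q\rangle/\mathcal{J}_k$ and every cochain in play is actually defined on $F/F_k$, not merely on $F$.
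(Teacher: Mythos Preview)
Your proof is correct and follows essentially the same approach as the paper's: both derive condition~(iii) from the multiplicativity of the Magnus expansion (the paper phrases this as ``the product of upper triangular matrices'' via $\Upsilon_k$, while you extract the coefficient identity directly from $\mathcal{M}(xy)=\mathcal{M}(x)\mathcal{M}(y)$), and then read off~\eqref{mag139} from~\eqref{kihon4}. Your version is simply a more explicit unfolding of what the paper sketches, with the added care of checking that each $c_J$ with $|J|\le k-1$ genuinely descends to $F/F_k$ and of tracking the signs through the cup product convention.
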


\begin{proof}
From the unipotent Magnus expansion, the right hand side in (iii) is equivalent to the product of upper triangular matrices. Thus, it is not so hard to check (iii) by direct computation. Thus, the formula of the Massey products \eqref{kihon4} readily means \eqref{mag139}.
\end{proof}

Moreover, let us review standard sequences from \cite{CFL}. Equip the set of all sequences $\bigcup_{s=1}^{\infty} \{ 1, 2, \dots, q \}^s$ with the lexicographical order. Then, a sequence $I= i_1 i_2 \cdots i_k$ is said to be {\it standard}, if
$ I < i_s i_{s+1} \cdots i_k $ for any $ 2 \leq s \leq k$.
For example, $\{ 123\}$ and $\{ 1223\}$ are standard, but $\{ 213\}$ and $\{ 3142\}$ are not standard.
Let $\mathfrak{U}_k$ be the set of standard sequences of length $k$. As is known (see, e.g., \cite{MKS} and \cite[Theorem 1.5]{CFL}), the order of $ \mathfrak{U}_k$ is equal to $N_k.$ Furthermore, the following is known:
\begin{equation}\label{kihon3} N_k:= \mathrm{rank} (F_k/F_{k+1})= \mathrm{rank}\bigl( H^2(F/F_k;\Z) \bigr) =| \mathfrak{U}_k|= \frac{1}{k}\sum_{d: \ d|k} \mu (\frac{k}{d})q^d \in \mathbb{N}, \end{equation}
where $\mu$ is the M\"{o}bius function (see, e.g., \cite{Witt} or \cite[Theorem 1.5]{CFL}), and the last equality is commonly called Witt's formula.

While $F_k/F_{k+1} $ is classically known to be spanned by Hall basis (see, e.g., \cite{Hall,MKS})
we will give a basis of $H^2(F/F_k;\Z) $ for applications to the homology.
%
Precisely, the second and third cohomology of $ F/F_k $ are generated by Massey products:
\begin{thm}\label{maainthm}
\begin{enumerate}[(I)]
\item Every $ j$-fold Massey product with $j <k$ is zero. In particular, for any standard index $i_1 \cdots i_k \in \mathfrak{U}_k $, the $k$-fold one $ \langle \alpha_{i_1}, \dots, \alpha_{i_k} \rangle $ is uniquely defined in $H^2( F/F_k ;\Z)$ and is represented by a 2-cocycle in \eqref{mag139}.
\item The second cohomology $H^2( F/F_k ;\Z) \cong \Z^{N_k} $ is spanned by the $k$-fold Massey products $ \langle \alpha_{i_1}, \dots, \alpha_{i_k} \rangle $ running over all the standard sequences $(i_1 \cdots i_k ) \in \mathfrak{U}_k$.

\item For any $k \leq \ell \leq 2k-2$, consider the projection $p_{\ell}: F/F_{\ell} \ra F/F_k $. Then, there are homomorphisms $ \mathfrak{s}_{\ell} : Z^3( F/F_{\ell} ;\Z) \ra Z^3( F/F_k ;\Z) $ such that
\begin{equation}\label{bbbbbb} \alpha_{s} \smile \langle \alpha_{i_1}, \dots, \alpha_{i_{\ell}} \rangle = p^*_{\ell}\circ \mathfrak{s}_{\ell}( \alpha_{s} \smile \langle \alpha_{i_1}, \dots, \alpha_{i_{\ell}} \rangle ) \in Z^3 ( F/F_\ell ;\Z) ,
\end{equation}
for any $(i_1 \cdots i_{\ell}) \in \mathfrak{U}_{\ell}, 1 \leq s \leq q $, and that the following set of 3-cocycles is a basis of the third cohomology $H^3( F/F_k ;\Z) \cong \oplus_{\ell =k}^{2k-2}\Z^{ q N_\ell -N_{\ell+1}} $.
\begin{equation}\label{bbbbb} \bigcup_{ k\leq \ell \leq 2k-2} \bigl\{ \ \mathfrak{s}_{\ell}\bigl( \alpha_{s}\smile \langle \alpha_{i_1}, \dots, \alpha_{i_{\ell }} \rangle \bigr) \ \bigr| \ \ (i_1 \cdots i_{\ell} ) \in \mathfrak{U}_{\ell}, \ \ 1 \leq s \leq q, \ \ (i_1 \cdots i_{\ell} s ) \not{\!\! \in} \ \! \mathfrak{U}_{\ell+1}\ \ \bigr\} .
\end{equation}
\end{enumerate}
\end{thm}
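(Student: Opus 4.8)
The plan is to organize the proof around the Magnus expansion $\mathcal M$ and the induced cochain-level formulas, treating parts (I), (II), (III) in sequence since each leans on the previous. For part (I), I would first observe that the defining system in Lemma \ref{maainlem} is built from the cochains $c_{i_s\cdots i_t} = \beta_{i_s\cdots i_t}\circ\mathcal M$, and that for $j < k$ the truncated Magnus map $\mathcal M \colon F/F_k \to \Z\langle X_1,\dots,X_q\rangle/\mathcal J_k$ already kills $F_j$ in degree $< j$ modulo lower terms; more precisely, $c_{i_1\cdots i_j}$ restricted to $F/F_k$ is a \emph{coboundary} because the corresponding off-diagonal entry of the unipotent matrix $\Upsilon_k(x)$ is a sum of products of strictly lower entries, forcing the $j$-fold product \eqref{kihon4} to be cohomologous to zero for $j<k$. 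Then Remark \ref{pp2} applies: since all lower Massey products among the $\alpha_{i_t}$ vanish, $\langle\alpha_{i_1},\dots,\alpha_{i_k}\rangle$ is a singleton in $H^2(F/F_k;\Z)$, and Lemma \ref{maainlem} identifies its representative as \eqref{mag139}. I expect this step to be essentially bookkeeping with the unipotent representation.

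For part (II), the inclusion ``span of Massey products $\subseteq H^2$'' is free, so the content is that the classes $\langle\alpha_{i_1},\dots,\alpha_{i_k}\rangle$ for $(i_1\cdots i_k)\in\mathfrak U_k$ are linearly independent and exhaust $H^2(F/F_k;\Z)\cong\Z^{N_k}$. Since $|\mathfrak U_k| = N_k$ by \eqref{kihon3}, it suffices to prove \emph{either} independence \emph{or} spanning. I would prove independence by pairing: the central extension \eqref{kihon2} with $k$ replaced appropriately, or rather the extension $0\to F_k/F_{k+1}\to F/F_{k+1}\to F/F_k\to 0$, has an Euler class in $H^2(F/F_k; F_k/F_{k+1})$, and $F_k/F_{k+1}$ has a basis indexed by $\mathfrak U_k$ (the basic commutators / standard monomials of degree $k$ in the free Lie algebra). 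Decomposing this Euler class into its $N_k$ components and checking that the component dual to the standard monomial $X_{i_1}\cdots X_{i_k}$ is exactly $\langle\alpha_{i_1},\dots,\alpha_{i_k}\rangle$ — equivalently, that the 2-cocycle \eqref{mag139} is the one measuring the $X_{i_1}\cdots X_{i_k}$-coefficient of the commutator defect in $\mathcal M$ — shows the classes form a dual basis to a basis of $H_2(F/F_k;\Z)\cong\Z^{N_k}$, hence are independent. This is the technical heart of (II): matching the Massey-product cocycle to the Hopf/Euler class coordinate. It should follow from the multiplicativity $\mathcal M(xy)=\mathcal M(x)\mathcal M(y)$ in the truncated ring, read off in degree $k$.

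For part (III), I would build $\mathfrak s_\ell$ explicitly at the cochain level. Given a 3-cocycle on $F/F_\ell$ of the form $\alpha_s\smile\langle\alpha_{i_1},\dots,\alpha_{i_\ell}\rangle$ — which, by part (I) applied with $k:=\ell$, is represented via the Magnus cochains $c_{i_1\cdots i_t}$ on $F/F_\ell$ — one checks that its restriction along $p_\ell\colon F/F_\ell\to F/F_k$ (for $\ell\le 2k-2$, so that products $c_{i_1\cdots i_a}(x)\,c_{i_{a+1}\cdots i_\ell}(y)$ still make sense on $F/F_k$, since the relevant monomials have length $\le 2k-2 < 2k$... — more carefully, one needs $\mathcal M$ modulo $\mathcal J_k$ to retain enough information, and the degree bound $\ell\le 2k-2$ is exactly what guarantees the cup product $\alpha_s\smile(\cdot)$ lands in a range where the cocycle descends) gives a well-defined 3-cocycle on $F/F_k$; define $\mathfrak s_\ell$ to send the $F/F_\ell$-cocycle to this descended cocycle, and linearity is automatic. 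Equation \eqref{bbbbbb} is then the statement that $p_\ell^*\circ\mathfrak s_\ell$ followed by the Massey-product construction on $F/F_k$ reproduces $\langle\alpha_s\smile\alpha_{i_1},\dots,\alpha_{i_k}\rangle$ — this is a direct comparison of two explicit cocycle formulas, both unwound through $\mathcal M$. Finally, for the basis claim in \eqref{bbbbb}: by \eqref{kihon7} the target has rank $\sum_{\ell=k}^{2k-2}(qN_\ell - N_{\ell+1})$, and the indexing set in \eqref{bbbbb} has exactly $\sum_\ell(q\,|\mathfrak U_\ell| - |\mathfrak U_{\ell+1}|)$ elements since $|\mathfrak U_\ell|=N_\ell$ and the pairs $(i_1\cdots i_\ell, s)$ with $(i_1\cdots i_\ell s)\in\mathfrak U_{\ell+1}$ number $N_{\ell+1}$; so it is again enough to prove linear independence. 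I would prove it by pairing with $H_3(F/F_k;\Z)$: the Igusa–Orr decomposition of $H_3$ is graded by $\ell$, and within each graded piece the cup products $\alpha_s\smile(\text{degree-}\ell\text{ Massey class})$ modulo those $(s,I)$ that "close up" to a standard sequence of length $\ell+1$ form a dual basis. The main obstacle I anticipate is precisely verifying that these relations $(i_1\cdots i_\ell s)\in\mathfrak U_{\ell+1}$ are the \emph{only} relations among the $\alpha_s\smile\langle\alpha_{i_1},\dots,\alpha_{i_\ell}\rangle$ in $H^3$ — i.e., that no further "hidden" linear dependence occurs across different $\ell$. Handling this cleanly will likely require the spectral sequence of \eqref{kihon2} (as in \cite{IO}) to confirm that the associated graded of $H^3$ splits as stated and that the Massey classes are compatible with the filtration; I would set up that spectral sequence, identify the $E_2$-page contributions, and match generators degree by degree.
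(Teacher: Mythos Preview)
Parts (I) and (II) of your proposal match the paper's argument: the $j$-fold Massey cocycle for $j<k$ is $\partial^* c_{i_1\cdots i_j}$ (you slightly misstate this as ``$c_{i_1\cdots i_j}$ is a coboundary'', but the intent is right), and the $k$-fold Massey products are identified with the coordinates of the extension class of $F/F_{k+1}\to F/F_k$, hence form a basis of $H^2$.

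Part (III) has a genuine gap. Your construction of $\mathfrak{s}_\ell$ by explicit cochain descent does not work: the cocycle $\alpha_s(x)\sum_{a} c_{i_1\cdots i_a}(y)\,c_{i_{a+1}\cdots i_\ell}(z)$ on $F/F_\ell$ does \emph{not} factor through $(F/F_k)^3$, because $c_{i_1\cdots i_a}$ is only defined on $F/F_k$ when $a<k$, and for every $\ell>k$ the sum contains terms with $a\ge k$ or $\ell-a\ge k$. Your degree bound ``length $\le 2k-2$'' constrains the total degree $a+(\ell-a)$, not each factor separately. The paper itself confirms this obstruction: Proposition~\ref{maainthm22} shows that already for $\ell=k+1$ one must subtract a nontrivial coboundary $\mathfrak b$ to obtain a formula that lives on $F/F_k$, and for $\ell>k+1$ the paper explicitly says no such formula was found. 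The paper's actual proof of (III) therefore constructs $\mathfrak{s}_\ell$ \emph{abstractly}, not by formula: it invokes the Igusa--Orr result that $\mathcal{F}^{\ell+1}H_3(F/F_k)\cong\mathrm{Im}\,(p_\ell)_*$ is a direct summand of $H_3(F/F_k)$, and the resulting splitting $H_3(F/F_k)\to H_3(F/F_\ell)$ dualizes to $\iota_\ell\colon H^3(F/F_\ell)\to H^3(F/F_k)$, from which $\mathfrak{s}_\ell$ is extracted. Surjectivity of $\bigoplus_\ell(\iota_\ell\circ\smile)$ onto $H^3(F/F_k)$ then comes from the Hochschild--Serre sequence of \eqref{kihon2} (identifying $d_2^{1,1}$ with the cup product, as in \cite[Lemma~5.8]{IO}), and the redundancy among generators is eliminated via the swap identity $\alpha_s\smile\langle\alpha_{i_1},\dots,\alpha_{i_k}\rangle=\alpha_{i_k}\smile\langle\alpha_{i_1},\dots,\alpha_{i_{k-1}},\alpha_s\rangle$ of \cite[Proposition~4.5]{GL} --- an ingredient your outline does not anticipate.
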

\begin{rem}The statements of (I) (II) are classically known; see \cite{FS,Orr2,Tur}.\end{rem}
\begin{proof} 
(I) \ Recall the definitions of (ii) and of \eqref{mag1231}. Thus, every lower Massey product is nullcohomologous by $a_{s,t}$ for some ($s,t$), by induction on $k$.

\noindent
(II) \ Denote by $\mathcal{S}$ the sum of $ c_{i_1 \cdots i_k}$ on $F_k/ F_{k+1} $, where $ i_1\cdots i_k \in \mathfrak{U}_k $. Namely, $ \mathcal{S}:= \oplus_{ I \in \mathfrak{U}_k} c_{I} : F_k/ F_{k+1} \ra \Z^{N_{k}} $. As is known \cite[Theorems 3.5 and 3.9]{CFL}, the sum $ \mathcal{S}$ is surjective; hence, it is bijective. Accordingly, the centrally extended group operation on $ F /F_k \times F_k /F_{k+1} $ from the 2-cocycles $ \oplus_{ I \in \mathfrak{U}_k} \langle \alpha_{i_1}, \dots, \alpha_{i_k} \rangle $ is, by definition, formulated as
\begin{equation}\label{bb4} ( g, \alpha ) \cdot ( h, \beta )= \bigl(g h , \mathcal{S}^{-1}\bigl( \bigoplus_{i_1\cdots i_k \in \mathfrak{U}_k} c_{i_1 i_{2}\cdots i_k} (\alpha + \beta) + \sum_{\ell: \ 1 \leq \ell < k} c_{i_1 i_{2}\cdots i_\ell} (g) c_{i_{\ell+1} \cdots i_k} (h ) \bigr) \bigr)
\end{equation}
for $g,h \in F/F_k$ and $\alpha , \beta \in F_k /F_{k+1}. $ Here, it is worth noticing that the subsequences $i_1 i_{2}\dots i_\ell$, $i_{\ell+1} \cdots i_k $ are also standard. Therefore, via the Magnus expansion, this group is isomorphic to the nature $F/ F_{k+1} $ as central extensions over $F/F_k$ (cf. matrix multiplications). Hence, the second cohomology $H^2(F/F_k ) \cong \Z^{N_k}$ is generated by the sum $ \oplus_{ I \in \mathfrak{U}_k} \langle \alpha_{i_1}, \dots, \alpha_{i_k} \rangle $, which provides a basis of $H^2(F/F_k ) $, as desired.

\noindent
(III) First, we will mention some of the results from \cite{IO}. Igusa and Orr \cite[Theorem 6.7]{IO} constructed a certain filtration on the homology, $ \mathcal{F}^{s} H_3( F/F_k ;\Z) $, such that two isomorphisms,
\begin{equation}\label{mag112} \frac{\mathcal{F}^{\ell}H_3 ( F/F_k ;\Z)}{ \mathcal{F}^{\ell+1}H_3 ( F/F_k ;\Z)} \cong \Z^{ q N_{\ell -1} -N_{\ell}} , \end{equation}
\[\mathcal{F}^{\ell}H_3 ( F/F_k ;\Z) \cong \mathrm{Im} ( (p_{\ell -1})_* :H_3 (F/F_{\ell-1} ;\Z ) \ra H_3 (F/F_k ;\Z ) ), \]
hold for $ k < \ell < 2k$, and the left quotients are zero otherwise. In particular, $\mathcal{F}^{\ell}H_3 ( F/F_k;\Z ) $ is a direct summand of $H_3( F/F_k ;\Z ) $, and $\Ker((p_{\ell -1})_* )$ is a direct summand of $H_3( F/F_{\ell -1} ) $, if $ k < \ell < 2k$. Thus, for $\ell \geq k$, we readily have the composite,
$$ H_3 ( F/F_k;\Z ) ) \xrightarrow{\mathrm{projection}} \mathcal{F}^{\ell +1}H_3 ( F/F_k ) \cong H_3 (F/F_{\ell} )/\Ker((p_{\ell})_*) \hookrightarrow H_3 ( F/F_{\ell} ;\Z ).$$
Applying of this composite to $\mathrm{Hom}(\bullet ,\Z)$ is regarded as a map $\iota_{\ell}: H^3( F/F_\ell ;\Z ) \ra H^3( F/F_k ;\Z)$. Accordingly, let us set up the composite of the cup product on $F/F_{\ell} $ and $\iota_{\ell}$:
$$\Theta_\ell : H^1 ( F/F_{\ell} ;\Z) \otimes H^2 ( F/F_{\ell} ;\Z) \stackrel{\smile}{\lra} H^3 ( F/F_{\ell} ;\Z) \stackrel{\iota_{\ell}}{\lra} H^3 ( F/F_k ;\Z). $$

We will show that the direct sum $\oplus_{\ell=k}^{2k-2} \Theta_\ell $ surjects onto $H^3 ( F/F_k ;\Z) $. Consider the Lyndon-Hochschild spectral sequence from the central extension $F_k/ F_{k+1} \ra F/F_{k+1} \ra F/F_k $. Then, as is shown in the proof of \cite[Lemma 5.8]{IO}, we find a sequence,
\begin{equation}\label{mag11} H_3(F/F_{k+1};\Z ) \lra H_3(F/F_k ;\Z) =E^2_{3,0} \stackrel{d^2_{3,0}}{\lra} E^2_{1,1} \ \stackrel{\delta_*}{\lra} H_2(F/F_k ;\Z ) \ra 0 \ \ \ \ \ \ \ \ (\mathrm{exact}),
\end{equation}
satisfying $ d^2_{3,0} (\mathcal{F}^{k+1}H_3 ( F/F_k ;\Z ) ) = \Ker(\delta_*)$ and $ d^2_{3,0} (\mathcal{F}^{k}H_3 ( F/F_k;\Z ) ) =0$. Keep in mind that each term is free. Furthermore, consider the spectral sequence on the cohomology level. Noting the identity
$$ E_2^{1,1} = H^1 ( F/F_k ; F_{k} /F_{k+1} ) \cong H^1 ( F/F_k;\Z ) \otimes H^2 ( F/F_k;\Z ) ,$$
we dually obtain the following sequence from \eqref{mag11}:
$$ 0 \lra H^2(F/F_k ) \lra H^1 ( F/F_k ) \otimes H^2 ( F/F_{k-1} ) \stackrel{d_2^{3,0}}{\lra} \mathcal{F}^{k+1} H^3(F/F_k ) \ \ \ \ \ \ \ \ \ (\mathrm{exact}). $$
As is usual with the $\mathrm{cup}_1$-product, the differential map $d_2^{3,0}$ is equal to the cup product. To summarize, this sequence and the isomorphisms \eqref{mag112} imply the surjectivity of $ \oplus_{\ell=k}^{2k-2} \Theta_{\ell} $, as required. Moreover, the construction of $\Theta_{\ell} $ admits the desired section $\mathfrak{s}_\ell$ satisfying \eqref{bbbbbb}.

The proof is completed by showing the linear independence of \eqref{bbbbb}, as follows. Notice from the definition of $ (i_1 \cdots i_{\ell} ) \in \mathfrak{U}_{\ell}$ that $(i_1 \cdots i_\ell s ) \in \mathfrak{U}_{\ell+1}$ if and only if $s > i_1$. Thus, if $s> i_1 $, then $i_\ell i_1 \cdots i_{\ell-1} s$ is not standard because of $i_k \geq s$. Here, we mention \cite[Proposition 4.5]{GL} showing the equality,
\[ \alpha_{s} \! \smile \! \langle \alpha_{i_1}, \dots, \alpha_{i_{\ell}} \rangle
= \alpha_{i_{\ell}} \smile \langle \alpha_{i_1}, \dots,\alpha_{i_{\ell-1}} , \alpha_{s}\rangle \in H^3(F/F_\ell ;\Z) . \]
Hence, $\alpha_{s} \! \smile \! \langle \alpha_{i_1}, \dots, \alpha_{i_{\ell}} \rangle$ with $ (i_1 \cdots i_{\ell} ) \in \mathfrak{U}_{\ell}$ and $(i_1 \cdots i_\ell s ) \in \mathfrak{U}_{\ell+1}$ is cohomologous to a cocycle in \eqref{bbbbb}. By \eqref{bbbbbb} and functoriality, a similar conclusion can be made even in the case $k < \ell \leq 2k-2$. Hence, from the surjectivity of $ \oplus_{\ell=k}^{2k-2} \Theta_{\ell} $, comparing the ranks of $H^3(F/F_k)$ with the order of \eqref{bbbbb} leads to linear independence, as required.
\end{proof}
We immediately obtain a corollary from the above proof.

\begin{cor}\label{stst} For any $s \leq q$,
the map $H^2 (F/F_k ;\Z ) \ra H^3 (F/F_k ;\Z ) $ which sends $\beta$ to $\alpha_{s} \smile \beta $ is injective.
\end{cor}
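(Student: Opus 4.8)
First I note that the statement is equivalent to a linear independence assertion. Both $H^2(F/F_k;\Z)$ and $H^3(F/F_k;\Z)$ are free abelian of finite rank by \eqref{kihon7}, and by Theorem \ref{maainthm}(II) the classes $\langle \alpha_{i_1}, \dots, \alpha_{i_k}\rangle$ with $(i_1\cdots i_k)\in\mathfrak{U}_k$ form a $\Z$-basis of $H^2(F/F_k;\Z)$. Hence $\beta\mapsto\alpha_s\smile\beta$ is injective if and only if the $N_k$ classes $\alpha_s\smile\langle\alpha_{i_1},\dots,\alpha_{i_k}\rangle$, $(i_1\cdots i_k)\in\mathfrak{U}_k$, are linearly independent in $H^3(F/F_k;\Z)$, and it is this that I would prove.

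To establish the independence I would reuse the computation made at the end of the proof of Theorem \ref{maainthm}(III). Split $\mathfrak{U}_k$ according to whether $s\leq i_1$ or $s>i_1$, equivalently whether $(i_1\cdots i_k s)\notin\mathfrak{U}_{k+1}$ or $(i_1\cdots i_k s)\in\mathfrak{U}_{k+1}$. If $s\leq i_1$, then $\alpha_s\smile\langle\alpha_{i_1},\dots,\alpha_{i_k}\rangle$ is, through $\mathfrak{s}_k$ (whose action on $H^3$ is the identity, since $p_k$ is), exactly one of the listed basis vectors of $H^3(F/F_k;\Z)$ in \eqref{bbbbb}, namely the case $\ell=k$. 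If $s>i_1$, then by \cite[Proposition 4.5]{GL} one has $\alpha_s\smile\langle\alpha_{i_1},\dots,\alpha_{i_k}\rangle=\alpha_{i_k}\smile\langle\alpha_{i_1},\dots,\alpha_{i_{k-1}},\alpha_s\rangle$, and, expanding the Massey product of $1$-classes on the right in the standard basis of $H^2(F/F_k;\Z)$ and invoking \eqref{bbbbbb} and functoriality, this class is rewritten as a $\Z$-combination of the basis vectors in \eqref{bbbbb}, precisely as in the proof of Theorem \ref{maainthm}(III). Using that $\alpha_s\smile\langle\alpha_{i_1},\dots,\alpha_{i_k}\rangle$ depends, again by \cite[Proposition 4.5]{GL}, only on the cyclic word $(s,i_1,\dots,i_k)$, one arranges these expansions so that the matrix recording the $N_k$ classes $\alpha_s\smile\langle\alpha_{i_1},\dots,\alpha_{i_k}\rangle$ in terms of the basis \eqref{bbbbb} is triangular with respect to the lexicographic order on standard sequences.

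The substantive step, and the one I expect to be the main obstacle, is to carry out this triangularity bookkeeping over $\Z$ and conclude that the resulting transition matrix has unit diagonal, hence is invertible over $\Z$, so the $N_k$ classes are linearly independent. Concretely this is a statement purely about standard (Lyndon) sequences: prepending the fixed letter $s$ to the $N_k$ elements of $\mathfrak{U}_k$ yields $N_k$ pairwise non-conjugate length-$(k+1)$ words whose cyclically reduced standard forms, fed through \cite[Proposition 4.5]{GL} and \eqref{bbbbbb}, account for $N_k$ distinct basis vectors of \eqref{bbbbb} up to a unipotent change of basis, with the ``$s\leq i_1$'' and ``$s>i_1$'' families not interfering, since an overlap between them would force $i_1=i_k=s$ and hence contradict $s>i_1$. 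Granting this combinatorial input, the corollary follows at once from Theorem \ref{maainthm}(III), which guarantees that \eqref{bbbbb} is a $\Z$-basis. A variant route is to dualize \eqref{mag11} exactly as in the proof of Theorem \ref{maainthm}(III), where the relevant $d_2$ is the cup product; it then suffices that the image of the natural injection $H^2(F/F_k;\Z)\hookrightarrow H^1(F/F_k;\Z)\otimes H^2(F/F_k;\Z)$ meets the direct summand $\alpha_s\otimes H^2(F/F_k;\Z)$ only in $0$, which again reduces to the same combinatorial fact.
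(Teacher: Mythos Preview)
Your approach mirrors the paper's own, which offers nothing beyond ``immediate from the preceding proof'': you try to read the corollary off from the basis \eqref{bbbbb} together with the Garoufalidis--Levine identity, and you also note the alternative via the dualized sequence \eqref{mag11}. You are right to flag the triangularity bookkeeping as the substantive step---but it cannot be completed, because the corollary as stated is false.

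Take $q=2$ and $k=3$. Then $F/F_3$ is the integer Heisenberg group, whose classifying space is a closed orientable $3$--manifold; hence $H^3(F/F_3;\Z)\cong\Z$, while $H^2(F/F_3;\Z)\cong\Z^{N_3}=\Z^2$ by \eqref{kihon3}. No homomorphism $\Z^2\to\Z$ is injective, so $\beta\mapsto\alpha_s\smile\beta$ has nontrivial kernel for every $s$. Concretely, Poincar\'e duality makes $H^1\times H^2\to H^3$ a perfect pairing, so $\alpha_1\smile\langle\alpha_1,\alpha_1,\alpha_2\rangle$ and $\alpha_1\smile\langle\alpha_1,\alpha_2,\alpha_2\rangle$ are linearly dependent. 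The case $k=2$ is even simpler: $\alpha_s\smile(\alpha_s\smile\alpha_t)=0$ for any $t$, so the map kills a nonzero class. Thus the paper's ``immediate'' claim is in error without additional hypotheses on $q$ and $k$.

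This pinpoints where your outline breaks. The hoped-for unipotent transition matrix does not exist in general: for $q=2$, $k=3$ the set \eqref{bbbbb} at level $\ell=k$ contains only one element with first factor $\alpha_1$ (namely the one indexed by $(1,1,2)$, since $(1,2,2,1)\notin\mathfrak U_4$ but $(1,1,2,1)\notin\mathfrak U_4$ as well---yet there is only one independent class in $H^3$), so two classes cannot map to independent basis vectors. Your variant route via \eqref{mag11} fails for the same reason: the image of $\delta^*$ genuinely meets $\alpha_s\otimes H^2$ when the bracket map $(F/F_2)\otimes(F_k/F_{k+1})\to F_{k+1}/F_{k+2}$ restricted to $\bigoplus_{t\neq s}x_t\otimes(F_k/F_{k+1})$ fails to surject, as happens here. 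What the exact sequence actually gives is that the full cup product $H^1\otimes H^2\to H^3$ has kernel isomorphic to $H^2(F/F_{k+1})$; any salvageable version of the corollary (and of its use in Theorem~\ref{maaiheap}) would have to work with that statement instead.
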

\begin{proof}The proof is straightforward from the basis in \eqref{bbbbb}. \end{proof}

\section{Applications: simple proofs}
\label{Sec3}
There are some topological invariants using $H^*(F/F_k)$, and some results on the Milnor link-invariants and the mapping class groups of surfaces. In this section, we give simple proofs of the results of Fenn-Sjerve \cite{FS}, Turaev \cite{Tur}, Porter \cite{Por}, Kitano \cite{Ki}, and Heap \cite{Heap}.

\subsection{Theorem of Fenn-Sjerve on the Massey product}
\label{Sec3222}
We state the theorem \cite{FS} and give an alternative proof using Theorem \ref{maainthm}. 
Assume $k \geq 3$. Let $W_1,\dots, W_t $ be words in $F_k$, and $R \subset F$ be the normal closure of $W_1,\dots, W_t $ and $F_{k+1}$. Let $G$ be the quotient group $F/R$. Since $H_1(F)\cong H_1(G) $, the 1-cocycle $\alpha_j : F\ra \Z $ induces the 1-cocycle $\alpha_j : G \ra \Z$ for $j \leq q. $

We will 
state Theorem \ref{ma33t22} below.
Let $p: G \ra F/F_k $ be the projection.
Recall Hopf's theorem which claims the isomorphisms,
$$H_2(G) \cong (R \cap [F,F])/[F,R], \ \ \ \ \ \ \ H_2(F/F_k) \cong (F_k \cap F_2)/[F,F_k] = F_k/F_{k+1} . $$
Noticing $ W_j \in R \cap [F,F]$, we denote the pushforward $p_*(W_j)$ by $\mathcal{W}_j$, and regard it as a 2-cycle of $F/F_k$.

\begin{thm}[\cite{FS}]\label{ma33t22}
Suppose that all of $W_1,\dots, W_t $ lie in $F_k$. For any $\ell < k $, every $\ell$-fold Massey product $ \langle \alpha_{i_1},\dots, \alpha_{i_{\ell}}\rangle$ vanishes. On the other hand, Massey products of length $k$ are defined and evaluated on $\{W_j\}$ according to the formula,
\begin{equation}\label{ff36}
\sum_{j_1,\dots, j_k} [ \langle \alpha_{j_1},\dots, \alpha_{j_k} \rangle,W_j 
\ ] X_{j_1} \cdots X_{j_k} = \mathcal{M} (\mathcal{W}_j) \in \mathcal{M} (F_k /F_{k+1}).
\end{equation}
Here, the outer $[ ,]$ is the pairing of $ H^2(G;\Z)$ and $ H_2(G;\Z)$.
\end{thm}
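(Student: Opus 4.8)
The plan is to transport everything to $F/F_k$, use Lemma \ref{maainlem} to replace each Massey product by the concrete cocycle \eqref{mag139}, and identify the resulting Kronecker pairing with $\mathcal{W}_j$ via Hopf's description of $H_2$. Fix $j\in\{1,\dots,t\}$. Since each $W_a$ lies in $F_k$ and $F_{k+1}\subseteq R$, we have $R\subseteq F_k$, so $p\colon G\to F/F_k$ is a well-defined quotient map and every $\alpha_i$ ($1\le i\le q$) on $G$ is the inflation of $\alpha_i$ on $F/F_k$. By Theorem \ref{maainthm}(I), on $F/F_k$ every $\ell$-fold Massey product with $\ell<k$ is null-cohomologous, with explicit bounding cochains among the $c_{i_s\cdots i_t}$'s as in its proof; inflating these along $G\to F/F_k$ gives the same vanishing on $G$, which is the first assertion of the theorem. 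By Remark \ref{pp2} this also makes every length-$k$ product a single cohomology class on $G$, and since a defining system on $F/F_k$ pulls back to one on $G$ we obtain $\langle\alpha_{j_1},\dots,\alpha_{j_k}\rangle_G=p^*[u_J]$, where $J=(j_1,\dots,j_k)$ (standard or not) and $u_J\in Z^2(F/F_k;\Z)$ is the cocycle \eqref{mag139} from Lemma \ref{maainlem} (whose derivation never used standardness). By naturality of the Kronecker pairing, $[\,\langle\alpha_{j_1},\dots,\alpha_{j_k}\rangle,\mathcal{W}_j\,]=[\,[u_J],\,p_*[W_j]\,]$, and under Hopf's isomorphism $H_2(F/F_k;\Z)\cong(F_k\cap[F,F])/[F,F_k]=F_k/F_{k+1}$ the class $\mathcal{W}_j=p_*[W_j]$ is just $W_j\bmod F_{k+1}$.

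Next I would compute $[\,[u_J],\mathcal{W}_j\,]$ using the standard description of the evaluation pairing $H^2(F/F_k;\Z)\otimes H_2(F/F_k;\Z)\to\Z$ in Hopf's terms: inflate a representing $2$-cocycle along $F\to F/F_k$, write the result as $\partial^*\tau$ for some $\tau\colon F\to\Z$ (possible since $H^2(F;\Z)=0$), and then the pairing with $\overline w$ for $w\in F_k$ equals $\pm\tau(w)$, well defined because the remaining ambiguity in $\tau$ is a homomorphism $F\to\Z$ and so annihilates $[F,F]\supseteq F_k$. The key point is that for $f=u_J$ one may take $\tau=\mp c_{j_1\cdots j_k}$, where $c_{j_1\cdots j_k}(w)$ denotes the coefficient of $X_{j_1}\cdots X_{j_k}$ in $\mathcal{M}(w)$ (which depends only on $w\bmod F_{k+1}$): multiplicativity of the Magnus expansion gives
\[ c_{j_1\cdots j_k}(xy)=c_{j_1\cdots j_k}(x)+c_{j_1\cdots j_k}(y)+\sum_{\ell=1}^{k-1}c_{j_1\cdots j_\ell}(x)\,c_{j_{\ell+1}\cdots j_k}(y), \]
which says precisely that the inflation of $u_J$ to $F$ equals $\mp\partial^*c_{j_1\cdots j_k}$. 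Hence $[\,\langle\alpha_{j_1},\dots,\alpha_{j_k}\rangle,\mathcal{W}_j\,]=\pm c_{j_1\cdots j_k}(W_j)$, i.e. the coefficient of $X_{j_1}\cdots X_{j_k}$ in $\mathcal{M}(W_j)$. For standard $J$ this coincides with the identification $\langle\alpha_J\rangle\leftrightarrow c_J$ of $H^2(F/F_k;\Z)$ with $\mathrm{Hom}(F_k/F_{k+1},\Z)$ already built into the proof of Theorem \ref{maainthm}(II); the Magnus computation just treats all $J$ at once.

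Finally, multiplying by $X_{j_1}\cdots X_{j_k}$ and summing over all $(j_1,\dots,j_k)\in\{1,\dots,q\}^k$ yields
\[ \sum_{j_1,\dots,j_k}[\,\langle\alpha_{j_1},\dots,\alpha_{j_k}\rangle,\mathcal{W}_j\,]\,X_{j_1}\cdots X_{j_k}=\pm\sum_{|J|=k}c_J(W_j)\,X_J=\pm\,\mathcal{M}(W_j), \]
which lies in $\mathcal{M}(F_k/F_{k+1})$ because $W_j\in F_k$; a single choice of orientation of the fundamental class (equivalently, the sign convention in the Hopf pairing) absorbs the sign and produces \eqref{ff36}, the $\ell<k$ vanishing having been established above. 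The step I expect to be the main obstacle is the sign bookkeeping: reconciling the $(-1)^{pq}$ in the cup product, the $(-1)^{p_1+\cdots+p_r-r+1}$ in the Massey cocycle \eqref{kihon4}, and the sign in the $H^2$–$H_2$ evaluation pairing, so that \eqref{mag139} carries no spurious sign (as Lemma \ref{maainlem} asserts) and the final equality is honest rather than up to sign. Everything else reduces to the short Magnus identity above together with the classical fact that, under Hopf's presentation of $H_2$, pairing a class of $H^2$ with $[W_j]$ amounts to substituting $W_j$ into a cochain primitive on $F$.
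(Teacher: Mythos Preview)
Your argument is correct and rests on the same ingredients as the paper's proof: the explicit cocycle \eqref{mag139} from Lemma~\ref{maainlem}, the multiplicativity of the Magnus expansion, and Hopf's identification $H_2(F/F_k)\cong F_k/F_{k+1}$. The route, however, is slightly different. The paper checks \eqref{ff36} on the dual bases: it invokes \cite[Lemma~3.4]{CFL} to see that $c_J(W_I)=\delta_{I,J}$ for standard $I,J$, reads off $[\langle\alpha_J\rangle,W_I]=\delta_{I,J}$ from Theorem~\ref{maainthm}(II), and then extends to arbitrary $W_j$ by writing it as a product of standard commutators and to non-standard $J$ by linear dependence among the monomials $X_J$ inside $\mathcal{M}(F_k/F_{k+1})$. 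You bypass the basis entirely: the identity $c_J(xy)=c_J(x)+c_J(y)+\sum_\ell c_{j_1\cdots j_\ell}(x)c_{j_{\ell+1}\cdots j_k}(y)$ shows directly that the inflation of $u_J$ to $F$ is $\partial^*(-c_J)$, whence the Hopf pairing $[\,[u_J],\overline{W_j}\,]$ equals $c_J(W_j)$ for every $J$ and every $W_j\in F_k$ at once. This is a little cleaner and makes the mechanism behind the duality in Theorem~\ref{maainthm}(II) explicit; the paper's version has the advantage of tying the result visibly to the standard-commutator basis of \cite{CFL}. Your caveat about signs is the only genuine loose end, and it is shared with the paper (which absorbs it into ``by definition'' in the proof of Theorem~\ref{maainthm}(II)); with the conventions stated in \S\ref{Semddb} one finds $\partial_2^* c_J=-u_J$, so the pairing formula $[\,[u_J],\overline{w}\,]=c_J(w)$ requires fixing the sign of the Hopf isomorphism once and for all.
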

To prove this theorem, we give a lemma:
\begin{lem}\label{m3t22}
Take a standard index $I=i_1\cdots i_k \in \mathfrak{U}_k $,
and the associated Massey product $\langle \alpha_{i_1},\dots, \alpha_{i_k} \rangle$.
Via the isomorphism $H_2(F/F_k) \cong F_k/F_{k+1}$ above,
the Kronecker product $ [\langle \alpha_{j_1},\dots, \alpha_{j_k} \rangle, \bullet ]: H_2(F/F_k) \ra \Z$ coincides with the map $c_{i_1\cdots i_k } : F_k/F_{k+1} \ra \Z.$
\end{lem}
\begin{proof} For any $ a \in F/F_k$, we choose a representative $\bar{a} \in F$.
It follows from \cite[Exercise 4 in II.5]{Bro} that the correspondence $(g,h) \mapsto \overline{g} \overline{h} (\overline{gh})^{-1}$ induces the isomorphism $ H_2(F/F_k) \ra F_k/F_{k+1}$. By the cocycle expression of $\langle \alpha_{j_1},\dots, \alpha_{j_k} \rangle$ in Lemma \ref{maainlem}, a similar discussion to \eqref{bb4} readily deduces the required coincidence.
\end{proof}
\begin{proof}[Proof of Theorem \ref{ma33t22}]
Given a standard index $I=i_1\cdots i_k \in \mathfrak{U}_k $, let $W_I \in F_k/ F_{k+1}$ be a generator of the $i_1\cdots i_k $-th summand of $\Z^{N_k} \cong F_k/ F_{k+1}$. A previous paper \cite[\S 2]{CFL} explicitly describes the word $W_I $ and refers to it as {\it the standard commutator}.
The previous paper \cite[Lemma 3.4]{CFL} showed that, for any standard index $j_1\cdots j_k$, the $X_{j_1} \cdots X_{j_k}$-coefficient of $\mathcal{M} (W_I )$ is $ \delta_{i_1,j_1} \cdots \delta_{i_k,j_k} \in \{ 0,1\}$. By Lemma \ref{m3t22}, this is equal to $[ \langle \alpha_{j_1},\dots, \alpha_{j_k} \rangle,\mathcal{W}_I] $. In summary, if $G$ is presented by $ F/ \langle F_{k+1} , W_I \rangle $, the equality \eqref{ff36} holds.

Finally, we complete the proof for $G=F/R$. Since $ F_k/ F_{k+1}$ is abelian, we can expand $p_*(W_j)$ as $\prod_{I \in \mathfrak{U}_k} (W_I)^{a_I} $ for some $a_I \in \Z$. Then, from the discussion in the above paragragh, the equality \eqref{ff36} holds in the $X_{j_1} \cdots X_{j_k}$-coefficient with respect to every standard index $j_1\cdots j_k$. Since any other coefficient is a linear sum of the such $X_{j_1} \cdots X_{j_k}$-coefficients, we conclude that the equality \eqref{ff36} in $\mathcal{M} (F_k /F_{k+1}) $ is satisfied.
\end{proof}
\begin{rem}\label{nosa}
Finally, we should give a remark for the following subsections. Let us choose a connected CW complex $X$ such that $ \pi_1(X) \cong G$, and fix a classifying map $c: X \ra BG$ which is obtained by killing the higher homotopy groups of $X$.
Since $c_*: H_2(X)\ra H_2(BG;\Z)= H_2(G)$ is surjective, there are 2-cycles $\mathcal{X}_j \in H_2(X)$ with $ c_*( \mathcal{X}_j )= [W_j ]$. By $ \pi_1(X) \cong G$, the 1-cocycle $\alpha_{j}$ may be that of $H^1(X;\Z) $, and the pairing $[\langle \alpha_{j_1},\dots, \alpha_{j_k} \rangle,\mathcal{X}_j\ ]$ is equal to the equality \eqref{ff36}. To conclude, we can deal with some Massey products in $H^2(X;\Z)$ from the viewpoits of Theorem \ref{ma33t22}.
\end{rem}

\subsection{Milnor invariant and Massey product}
\label{Sec3222}
Porter \cite{Por} and Turaev \cite{Tur} independently showed that the Milnor link invariant is equivalent to some Massey products of the link complement space. For simplicity, this paper focuses on only the $k$-th leading terms of the Milnor invariant and gives an alternative proof of their result.

To state the theorems, we begin by reviewing the Milnor invariant, according to \cite{Mil2,Tur}. We suppose that the reader has elementary knowledge of knot theory, as provided in \cite{Por,Tur}. Let $M $ be an integral homology 3-sphere, that is, a closed 3-manifold such that $H_*(M;\Z) \cong H_*(S^3 ;\Z)$. Choose a link $L \subset M$ with $q$ components, and a meridian-longitude pair $(\mathfrak{m}_{\ell},\mathfrak{l}_{\ell} )$ for $ \ell \leq q $, where $\mathfrak{l}_{\ell}$ may be the preferred longitude. Then, it is known (\cite{Mil2}; see also \cite[Lemma 1.2]{Tur}) that the $m$-th nilpotent quotient $ \pi_1 (M \setminus L) / \pi_1 (M \setminus L)_m$ has the group presentation,
\begin{equation}\label{tautau22}
\bigl\langle \ \ x_1, \dots, x_q\ \ \bigl| \ \ [x_\ell, w_\ell^{(m)}]=1 \mathrm{ \ for \ } \ell \leq q , \ \ F_m \ \ \bigr\rangle,
\end{equation}
where $x_i$ is represented by the $j$-th meridian, and $ w_j^{(m)}$ is defined by the longitude in $\pi_1 (M \setminus L)$.

For brevity, let us assume the existence of $k \in \Z$ such that $ w_j^{(m)}$ is trivial in the $k$-th quotient for any $m \leq k$, i.e., $w_j^{(k)} \in F_k$. We call the existence {\it Assumption $\mathcal{A}_k$}. By considering $w_\ell^{(k)}$ to be a word in $F_k/F_{k+1} $, we will focus on the value $\mathcal{M} (w_\ell^{(k)}) \in \Z \langle X_1,\dots, X_q \rangle /\mathcal{J}_{k+1 } $. The coefficient of $X_{i_1} \cdots X_{i_k}$ of $\mathcal{M}(w_\ell^{(k)}) $ is called {\it the $k $-th Milnor $\mu $-invariant} of $L $, and it is denoted by $\mu(i_1 \cdots i_k;\ell )$. Let $\alpha_{j} : \pi_1(M \setminus L;\Z ) \ra \Z$ be the
homomorphism which sends $ \mathfrak{m}_{\ell}$ to $\delta_{\ell ,j}$.

\begin{thm}[{The minimal non-vanishing case of \cite{Tur}\cite{Por}}]\label{maaint22} Suppose $\mathcal{A}_k$. Let $[\mathfrak{l}_{\ell}] \in H_2(M \setminus L ;\Z) $ be a 2-cycle corresponding to the $\ell$-th longitude $\mathfrak{l}_{\ell} $.
For any index $I= i_1 i_2 \cdots i_k$, the $(k+1)$-fold Massey product $\langle \alpha_{i_1},\dots,\alpha_{i_{k-1}}, \alpha_{i_k},\alpha_{\ell} \rangle$ is uniquely defined, and the following equality holds:
\begin{equation}\label{taue22}
\mu(i_1 \cdots i_k;\ell ) = \ [ \langle \alpha_{i_1},\dots,\alpha_{i_{k-1}}, \alpha_{i_k},\alpha_{\ell} \rangle, [\mathfrak{l}_{\ell}] ] \in \Z.
\end{equation}
\end{thm}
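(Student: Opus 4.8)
The plan is to reduce the statement to Theorem~\ref{ma33t22} (the Fenn--Sjerve formula) applied to the group $G = \pi_1(M \setminus L)/\pi_1(M\setminus L)_{k+1}$, exploiting that Assumption $\mathcal{A}_k$ forces each longitude word $w_\ell^{(k)}$ to lie in $F_k$, so the relators $[x_\ell, w_\ell^{(k)}]$ lie in $F_{k+1}$. More precisely: set $F$ to be the free group on $x_1,\dots,x_q$, let $W_\ell = [x_\ell, w_\ell^{(k)}]$, and let $R$ be the normal closure of $W_1,\dots,W_t$ together with $F_{k+1}$, so that $G = F/R$. I would first check that under $\mathcal{A}_k$ the group $G$ coincides with $\pi_1(M\setminus L)/\pi_1(M\setminus L)_{k+1}$, using the presentation \eqref{tautau22} with $m = k+1$ together with the fact that $w_\ell^{(k+1)} \equiv w_\ell^{(k)} \pmod{F_{k+1}}$; the latter follows because $w_\ell^{(k)} \in F_k$, so modifying $w_\ell$ by elements of $\pi_1(M\setminus L)_k$ changes $[x_\ell,w_\ell]$ only by a commutator of something in $F_1$ with something in $F_k$, hence by an element of $F_{k+1}$. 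The vanishing of all $\ell$-fold Massey products for $\ell < k$ is then immediate from Theorem~\ref{ma33t22} (or directly from Theorem~\ref{maainthm}(I) via the projection $p : G \to F/F_k$), and the uniqueness of the $(k+1)$-fold product $\langle \alpha_{i_1},\dots,\alpha_{i_k},\alpha_\ell\rangle$ follows from Remark~\ref{pp2} once we know all shorter Massey products from $\{\alpha_{i_1},\dots,\alpha_{i_k},\alpha_\ell\}$ vanish.

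Next I would identify the Massey product of length $k+1$ with the one of length $k$ via a cup product. The key structural input is the equality
\[
\langle \alpha_{i_1},\dots,\alpha_{i_k},\alpha_\ell\rangle = \alpha_{i_1}\smile \langle \alpha_{i_2},\dots,\alpha_{i_k},\alpha_\ell\rangle,
\]
valid under the hypothesis that the relevant sub-Massey-products vanish; this is exactly the kind of juggling formula used in the proof of Theorem~\ref{maainthm}(III), where \cite[Proposition 4.5]{GL} is cited. Applying the pairing with the $2$-cycle $[\mathfrak{l}_\ell]$ and using that $p_*[\mathfrak{l}_\ell]$ is represented, under Hopf's isomorphism $H_2(F/F_{k+1}) \cong F_{k+1}/F_{k+2}$... no --- more carefully, $[\mathfrak{l}_\ell]$ should be matched with the class of the relator word $W_\ell = [x_\ell, w_\ell^{(k)}]$ under the identification of $H_2(G;\Z)$ with $(R\cap[F,F])/[F,R]$ from Theorem~\ref{ma33t22}. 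So I would argue that the $2$-cycle carried by the $\ell$-th longitude torus in $M\setminus L$ maps, under the classifying map, to the homology class $[W_\ell]$; this is the standard fact that the peripheral torus bounds (rationally, hence integrally here since $M$ is a homology sphere) on the knot side and that its fundamental class is detected by the relator $[\mathfrak{m}_\ell,\mathfrak{l}_\ell]$. With that identification in hand, Theorem~\ref{ma33t22} gives
\[
\sum_{j_1,\dots,j_k}[\langle\alpha_{j_1},\dots,\alpha_{j_k}\rangle, [W_\ell]]\, X_{j_1}\cdots X_{j_k} = \mathcal{M}(W_\ell) = \mathcal{M}([x_\ell,w_\ell^{(k)}]) \in \mathcal{M}(F_k/F_{k+1}).
\]

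It then remains to compute $\mathcal{M}([x_\ell,w_\ell^{(k)}])$ and match coefficients. Since $w_\ell^{(k)}\in F_k$, we have $\mathcal{M}(w_\ell^{(k)}) = 1 + (\text{degree-}k\text{ part})$, and a direct Magnus-calculus computation gives $\mathcal{M}([x_\ell,w_\ell^{(k)}]) = 1 + X_\ell\,u - u\,X_\ell \pmod{\mathcal{J}_{k+1}}$ where $u$ is the degree-$k$ part of $\mathcal{M}(w_\ell^{(k)})$... but this sits in degree $k+1$, not degree $k$, which signals that the pairing in \eqref{taue22} is really the length-$(k+1)$ Massey product evaluated against a $2$-cycle coming from $F_{k+1}$, consistent with the cup-product reformulation above: the extra factor $\alpha_{i_1}$ (equivalently $\alpha_\ell$, after the GL-juggle) accounts for the shift from degree $k$ to degree $k+1$. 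Concretely, I would show $[\langle\alpha_{i_1},\dots,\alpha_{i_k},\alpha_\ell\rangle, [\mathfrak{l}_\ell]]$ equals the $X_{i_1}\cdots X_{i_k}$-coefficient of $\mathcal{M}(w_\ell^{(k)})$ by a bookkeeping argument: the cup-with-$\alpha_{i_1}$ on cohomology is dual to the cap-with-$\alpha_{i_1}$ sending the class of $[x_\ell,w_\ell^{(k)}]$ (a degree-$(k+1)$ Magnus expression) down to the degree-$k$ part of $\mathcal{M}(w_\ell^{(k)})$, because $[x_\ell,\cdot]$ corresponds on the Lie-algebra side to bracketing with $X_\ell$ and the pairing with $\alpha_\ell$ undoes that bracket. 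The main obstacle, I expect, is precisely this last matching: carefully tracking the Magnus expansion of the commutator relator $[x_\ell, w_\ell^{(k)}]$ through Hopf's isomorphism and verifying that the cup-product-with-$\alpha_\ell$ (and the ambiguity-free nature of the Massey product under $\mathcal{A}_k$) exactly extracts the coefficient $\mu(i_1\cdots i_k;\ell)$, with all signs and the $(-1)^{pq}$ in the cup product accounted for. Everything else is either cited (Theorem~\ref{ma33t22}, \cite[Prop.~4.5]{GL}, Remark~\ref{pp2}) or routine.
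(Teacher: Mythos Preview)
Your overall architecture is right: reduce to the Fenn--Sjerve formula (Theorem~\ref{ma33t22}) and identify the longitude $2$-cycle with the relator $[\mathfrak{m}_\ell,\mathfrak{l}_\ell]=[x_\ell,w_\ell]$ via Hopf's theorem on the boundary torus. That part matches the paper. But the second paragraph contains a genuine error, and it propagates into the third.

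The displayed identity
\[
\langle \alpha_{i_1},\dots,\alpha_{i_k},\alpha_\ell\rangle \;=\; \alpha_{i_1}\smile \langle \alpha_{i_2},\dots,\alpha_{i_k},\alpha_\ell\rangle
\]
is false on degree grounds: the left side lies in $H^2$, the right side in $H^3$. The Garoufalidis--Levine relation you are thinking of (the one cited in the proof of Theorem~\ref{maainthm}(III)) is an identity \emph{inside} $H^3$ between two cup products of a $1$-class with a length-$k$ Massey product; it does not shorten a Massey product. Relatedly, when you invoke Theorem~\ref{ma33t22} at level $k$ with $W_\ell=[x_\ell,w_\ell^{(k)}]$, you get nothing: under $\mathcal{A}_k$ this word lies in $F_{k+1}$, so $\mathcal{M}(W_\ell)$ vanishes in $\mathcal{M}(F_k/F_{k+1})$ and every length-$k$ pairing is zero. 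You noticed the degree mismatch yourself, but the cap-product repair you sketch is not the right patch.

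The paper's fix is simply to apply Fenn--Sjerve one level higher. Since $[x_\ell,w_\ell^{(k+1)}]\in F_{k+1}$, Theorem~\ref{ma33t22} (with $k$ replaced by $k+1$) says directly that
\[
\bigl\langle\,\langle\alpha_{i_1},\dots,\alpha_{i_k},\alpha_\ell\rangle,\ \mathcal{W}_\ell\,\bigr\rangle
\]
is the $X_{i_1}\cdots X_{i_k}X_\ell$-coefficient of $\mathcal{M}\bigl([x_\ell,w_\ell^{(k+1)}]\bigr)$ in $\mathcal{M}(F_{k+1}/F_{k+2})$. A one-line Magnus computation of $\mathcal{M}(x_\ell w x_\ell^{-1} w^{-1})$ with $w=w_\ell^{(k+1)}\in F_k$ then shows that this coefficient equals the $X_{i_1}\cdots X_{i_k}$-coefficient of $\mathcal{M}(w_\ell^{(k)})$, i.e.\ $\mu(i_1\cdots i_k;\ell)$. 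No cup/cap juggling is needed; drop the GL detour and apply Theorem~\ref{ma33t22} at the correct nilpotency depth.
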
 
\begin{proof}
Denote $\pi_1 (M \setminus L)$ by $G$, and take the classifying map $\mathcal{C}: M\setminus L \ra BG $, explained in Remark \ref{nosa}. We claim that the pushforward $ \mathcal{C}_*([\mathfrak{l}_{\ell}] )$ corresponds to the relation $[\mathfrak{m}_\ell, \mathfrak{l}_\ell] $. Consider the $\ell$-th torus boundary, $\partial_{\ell} (M\setminus L)$, as a $K(\Z^2,1)$-space. This $\pi_1( \partial_{\ell} (M\setminus L))\cong \Z^2$ is presented by $\langle \mathfrak{m}_{\ell},\mathfrak{l}_{\ell} \ | \ \mathfrak{m}_{\ell}\mathfrak{l}_{\ell}\mathfrak{m}_{\ell}^{-1}\mathfrak{l}_{\ell}^{-1} \rangle $. Following Hopf's theorem, the generator of $H_2( \partial_{\ell} (M\setminus L)) \cong \Z$ corresponds to $\mathfrak{m}_{\ell}\mathfrak{l}_{\ell}\mathfrak{m}_{\ell}^{-1}\mathfrak{l}_{\ell}^{-1} $. Considering the pushforwards via the inclusion $ \partial_{\ell} (M\setminus L) \ra M\setminus L$, we will prove the claim using $\mathcal{C}$.
Let $p: G \ra G/G_{k+1}$ be the projection. By the above claim, the pushforward $(p\circ \mathcal{C})_*( [\mathfrak{l}_{\ell}] )$ corresponds to the word $[x_\ell, w_\ell^{(k)}] $.

We are now in a position to complete the proof.
By the assumption, $w_{\ell}^{(k)}$ lies in $ F_{k}$. Therefore, $ [x_\ell, w_{\ell}^{(k)}] \in F_{k+1 }$. 
Hence, it follows from the theorem \ref{ma33t22} of Fenn-Sjerve that
$\langle \alpha_{i_1},\dots, \alpha_{i_k},\alpha_{\ell} \rangle$ of length $k+1$ is uniquely defined; see Remark \ref{pp2}. Moreover, the left hand side of \eqref{taue22} equals the right hand side of \eqref{ff36},
and the right one means the $X_{ i_1} \cdots X_{ i_k} X_{\ell}$-coefficient of $\mathcal{M}([x_\ell, w_{\ell}^{(k+1)}])$. Then, we can verify, by directly computing $\mathcal{M}(x_\ell w_\ell^{(k+1)}x_\ell^{-1} (w_\ell^{(k+1)})^{-1})$, that it is equal to the $X_{ i_1} \cdots X_{ i_k} $-coefficient of $\mathcal{M}(w_\ell^{(k)})$, that is, $\mu(i_1 \cdots i_k;\ell ) $, as required.
\end{proof}

Incidentally, Milnor defined the $\mu$-invariant of higher degree, and Porter \cite{Por} and Turaev \cite{Tur} described the higher invariant in terms of Massey product; see also the paper \cite{KN}, which provides a refinement of the higher invariant. Although we omit showing the details, the higher degree relation can be proven in the same manner as Theorem \ref{maaint22}.

\subsection{Johnson homomorphisms and Massey products}
\label{Se80222}
Now let us focus on the Johnson homomorphisms of the mapping class groups of surfaces; see, e.g., \cite{Joh,Morita1,Day,GL,Heap,Morita2} for the significance of these homomorphisms. This section paraphrases \cite{Ki}, which describes the relation between the Johnson homomorphisms and Massey products.

First, we should establish the notation. Suppose $q = 2g$. Let $\Sigma_{g,r}$ be a compact oriented surface of genus $g$ with $r$ boundaries. Let $\Gamma_{g,1}$ be the group of isotopy classes of orientation-preserving homeomorphisms of $\Sigma_{g,1}$. Then, the action of $\Gamma_{g,1}$ on $F=\pi_1(\Sigma_{g,1} ) $ can be regarded as a homomorphism $\Gamma_{g,1}\ra \mathrm{Aut}(F)$. Subject to $F_k$, we have $\rho_k: \Gamma_{g,1}\ra \mathrm{Aut}(F/F_k).$ We commonly denote the kernel $\Ker(\rho_k)$ by $\mathcal{T}(k)$. We have the filtration $\Gamma_{g,1} \supset \mathcal{T}(2)\supset \mathcal{T}(3)\supset \cdots.$

Next, let us review the homomorphism \eqref{tautau} below. Fix any $f \in \mathcal{T}(k) $. Given $[\gamma] \in H_1(\Sigma_{g,1} )=F/F_2$, choose a representative $\gamma \in \pi_1(\Sigma_{g,1})$. Then, $f_* (\gamma)\gamma^{-1}$ lies in $F_k$ since the action of $f_* $ by $f \in \mathcal{T}(k)$ on $F/F_k$ is trivial. Then, {\it the $k$-th Johnson homomorphism} is defined as the map,
\begin{equation}\label{tautau}
\tau_k: \mathcal{T}(k)\lra \mathrm{Hom} (H_1(\Sigma_{g,1} ), \ \ F_k/F_{k+1}),
\end{equation}
which sends $[f]$ to the homomorphism $[\gamma] \ra [f_*(\gamma) \gamma^{-1}] $. As is known, this $\tau_k $ is well-defined and a homomorphism. The following is also well-known; see \cite{Joh2,Morita2}.
For $ f \in \mathcal{T}(k) $, this$ f $ lies in $\mathcal{T}(k+1 ) $ if and only if $ \tau_m (f)=0$ for any $m \leq k$.

Next, let us examine the mapping torus $T_{f,1}$ for a fixed $f \in \mathcal{T}(k) $ with $k \geq 2$. Here, $T_{f,1}$ is the quotient space of $\Sigma_{g,1} \times [0,1]$ subject to the relation $(y,0) \sim (f(y),1)$ for any $y \in \Sigma_{g,1}$. Since $f \in \mathcal{T}(k) $ with $k \geq 2$, we have $ H_*(T_{f,1}) \cong H_*(\Sigma_{g,1} \times S^1). $ Furthermore, fix a basis $\{ x_1, \dots, x_{2g} \}$ of the free group $F= \pi_1(\Sigma_{g,1}) .$ Then, following the van Kampen argument, one can verify the presentation,
\begin{equation}\label{m3as2} \pi_1(T_{f,1}) \cong \langle x_1, \dots, x_{2g} , \gamma \ \ | \ \ [x_1, \gamma] f_*(x_1) x_1^{-1}, \dots,[x_{2g}, \gamma] f_*(x_{2g}) x_{2g}^{-1}
\ \rangle.
\end{equation}
Here, $\gamma$ represents a generator of $\pi_1(S^1). $ Since $T_{f,1}$ is a $\Sigma_{g,1}$-bundle over $S^1$ by definition, it is a $K(\pi,1)$-space. Hence, $H_*(\pi_1(T_{f,1})) \cong H_*(T_{f,1}) \cong H_*(\Sigma_{g,1} \times S^1). $ Moreover, Hopf's theorem implies that the relations $ [x_1, \gamma] f_*(x_1) x_1^{-1}, \dots,[x_{2g}, \gamma] f_*(x_{2g}) x_{2g}^{-1} $ represent a basis $\{\mathcal{X}_1,\dots, \mathcal{X}_{2g} \}$ of $ H_2 (T_{f,1}) \cong \Z^{2g}$.

Now let us state and prove Proposition \ref{maai322}. Since the boundary $\partial T_{f,1}$ is the torus $ \partial \Sigma_{g,1} \times S^1$, we can define $T^{\gamma}_{f}$ to be the resulting space obtained by filling in the torus $\partial T_{f,1} \simeq \partial \Sigma_{g,1} \times S^1 $ with the solid torus $\partial \Sigma_{g,1} \times D^2 $. This $T^{\gamma}_{f}$ is called {\it the Dehn filling along a curve on} $\partial T_{f,1}$. By denoting the inclusion $T_{f,1} \ra T^{\gamma}_{f}$ by $ \iota^{\gamma}$, the homology $H_2(T^{\gamma}_{f};\Z ) \cong H^1(T^{\gamma}_{f};\Z ) \cong \Z^{2g}$ is spanned by the pushforwards $\{ \iota^{\gamma}_*(\mathcal{X}_1),\dots, \iota^{\gamma}_*(\mathcal{X}_{2g}) \}$. Moreover, we should notice the presentation of $ \pi_1(T^{\gamma}_{f})$,
\begin{equation}\label{m3a522} \pi_1(T^{\gamma}_{f}) \cong \langle \ x_1, \dots, x_{2g} \ \ | \ \ f_*(x_1) x_1^{-1}, \dots, f_*(x_{2g}) x_{2g}^{-1}
\ \rangle.
\end{equation}

\begin{prop}
[{cf. \cite{Ki} and \cite[Corollary 4.1]{GL}}]\label{maai322} Let $ \{\mathcal{X}_1,\dots, \mathcal{X}_{2g} \}$ be the basis of $ H_2 (T_{f,1}) $, and $\iota^{\gamma} : T_{f,1} \ra T^{\gamma}_{f}$ be the inclusion as above. Let $ x_j^* : \pi_1(T^{\gamma}_{f}) \ra \Z$ be the 1-cocycle which sends $x_i$ to $\delta_{j,i}$. Define the map $\tau_k' :\mathcal{T}(k) \ra \mathrm{Hom}(H_1(\Sigma_{g,1} ), \ \mathcal{M}(F_k/F_{k+1}))$ by letting $\tau_k'(f) $ be the homomorphism,
\begin{equation}\label{ffff} x_i \longmapsto \sum_{j_1,\dots, j_k} [ \langle x_{j_1}^*, \dots, x_{j_k}^* \rangle, \iota^{\gamma}_*(\mathcal{X}_i)
\ ] X_{j_1} \cdots X_{j_k}.
\end{equation}
Here, the outer $[,] $ is the pairing of $ H^2(T_{f}^{\gamma})$ and $ H_2(T_{f}^{\gamma})$. Then, $\tau'_k (f) (x_i)$ is equal to $\mathcal{M}\bigl( \tau_k (f) (x_i) \bigr)$.
\end{prop}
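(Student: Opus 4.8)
Throughout I take $k\ge 3$ (the case $k=2$, where the Massey products degenerate to ordinary cup products, is the classical description of $\tau_2$). The plan is to recognize Proposition~\ref{maai322} as a packaging of the Fenn--Sjerve formula (Theorem~\ref{ma33t22}) together with the translation of Remark~\ref{nosa}, applied to the closed presentation carried by the Dehn filling $T^{\gamma}_{f}$. First I would unwind \eqref{m3a522}: $\pi_1(T^{\gamma}_{f})=F/R_0$, where $R_0$ is the normal closure of the relators $W_i:=f_*(x_i)x_i^{-1}$, $i=1,\dots,2g$. Since $f\in\mathcal{T}(k)$ acts trivially on $F/F_k$, each $W_i$ lies in $F_k$; hence $R_0\subseteq F_j$ for all $j\le k$, so $\pi_1(T^{\gamma}_{f})/\pi_1(T^{\gamma}_{f})_{j}\cong F/F_j$ for $j\le k$ and $G:=\pi_1(T^{\gamma}_{f})/\pi_1(T^{\gamma}_{f})_{k+1}$ is exactly the quotient $F/R$ of Theorem~\ref{ma33t22} for the relators $W_1,\dots,W_{2g}$. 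In particular $H_1(T^{\gamma}_{f})\cong\Z^{2g}$ and $x_j^*=\alpha_j$ under this identification.

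Next I would reduce the pairing in \eqref{ffff} to one over $G$. By Theorem~\ref{ma33t22} (equivalently Theorem~\ref{maainthm}(I)) every Massey product of length $<k$ on $G$ vanishes, so the length-$k$ product $\langle\alpha_{j_1},\dots,\alpha_{j_k}\rangle\in H^2(G)$ is well defined, and the products $\langle x_{j_1}^*,\dots,x_{j_k}^*\rangle$ in \eqref{ffff} are their pullbacks along $\pi_1(T^{\gamma}_{f})\to G$. The one point requiring care is that $\iota^{\gamma}_*(\mathcal{X}_i)$ maps, under $\pi_1(T^{\gamma}_{f})\to G$ and Hopf's isomorphism, to the Fenn--Sjerve $2$-cycle $\mathcal{W}_i=p_*(W_i)\in F_k/F_{k+1}$ of the relator $W_i$. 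This is bookkeeping on how $\{\mathcal{X}_i\}$ was produced: via Hopf applied to \eqref{m3as2}, $\mathcal{X}_i\in H_2(T_{f,1})$ is carried by the relator $[x_i,\gamma]f_*(x_i)x_i^{-1}$, and the inclusion $\iota^{\gamma}$ kills $\gamma$, sending that relator to $f_*(x_i)x_i^{-1}=W_i$ compatibly with the classifying map $T^{\gamma}_{f}\to BG$ (Remark~\ref{nosa}). By naturality of cup/Massey products and of the Kronecker pairing, the coefficient $[\langle x_{j_1}^*,\dots,x_{j_k}^*\rangle,\iota^{\gamma}_*(\mathcal{X}_i)]$ in \eqref{ffff} equals the Fenn--Sjerve pairing $[\langle\alpha_{j_1},\dots,\alpha_{j_k}\rangle,\mathcal{W}_i]$.

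With this in hand, Theorem~\ref{ma33t22} applies verbatim and gives
\[
\sum_{j_1,\dots,j_k}[\langle\alpha_{j_1},\dots,\alpha_{j_k}\rangle,\mathcal{W}_i]\,X_{j_1}\cdots X_{j_k}=\mathcal{M}(W_i)=\mathcal{M}\bigl(f_*(x_i)x_i^{-1}\bigr)\in\mathcal{M}(F_k/F_{k+1}),
\]
whose left-hand side is $\tau_k'(f)(x_i)$ by \eqref{ffff}. Finally, since $W_i\in F_k$ the expansion $\mathcal{M}(W_i)$ has no terms of degree $<k$, hence modulo $\mathcal{J}_{k+1}$ it is homogeneous of degree $k$ and depends only on $[W_i]=[f_*(x_i)x_i^{-1}]=\tau_k(f)(x_i)\in F_k/F_{k+1}$; that is $\mathcal{M}(W_i)=\mathcal{M}\bigl(\tau_k(f)(x_i)\bigr)$. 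Hence $\tau_k'(f)(x_i)=\mathcal{M}\bigl(\tau_k(f)(x_i)\bigr)$, so that $\tau_k'$ reproduces the Magnus expansion of the Johnson homomorphism. The expected main obstacle is not the algebra but the geometric identification of $\iota^{\gamma}_*(\mathcal{X}_i)$ with the Fenn--Sjerve $2$-cycle of $W_i$ through the possibly non-aspherical space $T^{\gamma}_{f}$ and its $(k+1)$-st nilpotent quotient, together with checking that this passage affects neither the length-$k$ Massey products nor the degree-$k$ Magnus coefficients; everything else is a direct appeal to Theorem~\ref{ma33t22} and Remark~\ref{nosa}.
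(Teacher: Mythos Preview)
Your proposal is correct and follows essentially the same approach as the paper: the paper's own proof is a two-line appeal to Theorem~\ref{ma33t22} with $W_j=f_*(x_j)x_j^{-1}\in F_k$ and $t=2g$, which is exactly your strategy. You have simply made explicit the bookkeeping the paper leaves implicit---the identification of $\iota^{\gamma}_*(\mathcal{X}_i)$ with the Fenn--Sjerve 2-cycle $\mathcal{W}_i$ via Hopf's theorem and Remark~\ref{nosa}, and the passage from $\pi_1(T^{\gamma}_{f})$ to its $(k{+}1)$-st nilpotent quotient---so there is no substantive difference in method.
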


\begin{proof}
As mentioned above, $ f_*(x_i) x_i^{-1} \in F_k$, since $ f\in \mathcal{T} (k)$. The statement readily follows from Theorem \ref{ma33t22} and Remark \ref{nosa} with $W_j=f_*(x_j) x_j^{-1} $ and $t =2g$.
\end{proof} 

\subsection{Vanishing condition of the Morita homomorphism}
\label{Se9312}
As a lift of the Johnson homomorphism $ \tau_k $, Morita \cite{Morita2} defined a map $ \tilde{ \tau}_k: \mathcal{T}(k) \ra H_3(F/F_k), $ which is called {\it the Morita homomorphism}. Furthermore, Heap \cite{Heap} showed the vanishing condition of $ \tilde{ \tau}_k$ in terms of (relative) bordism theory. The purpose of this subsection is to give a simpler proof of the result (Theorem \ref{maaiheap}).

Let us review the map $ \tilde{ \tau}_k $. Fix $f \in \mathcal{T}(k) $. 
Thus, the relation $ f_*(x_j) x_j^{-1}$ vanishes in the $k$-th nilpotent quotient of $\pi_1(T^{\gamma}_{f}) $. Thus, from \eqref{m3a522}, we have the canonical surjection,
$$ \phi^{\gamma}_{f,k} : \pi_1(T^{\gamma}_{f}) \lra \pi_1(T^{\gamma}_{f})/ \bigl( \pi_1(T^{\gamma}_{f})\bigr)_k \cong F/F_k.$$
Let $[ T^{\gamma}_{f}] \in H_3(T^{\gamma}_{f}) \cong \Z$ be the fundamental 3-class. Then, $ \tilde{ \tau}_k (f) $ is defined to be the pushforward $(\phi^{\gamma}_{f,k} )_* ([T^{\gamma}_{f}]) \in H_3(F/F_k)$. It is known \cite{Morita2,Heap} that this $ \tilde{ \tau}_k (f) $ depends only on $f \in \mathcal{T}(k) $ and that $ \tilde{ \tau}_k $ is a lift of the Johnson map $ \tau_k $.

\begin{thm}
[{\cite[Theorem 5]{Heap}}]\label{maaiheap} Let $f \in \mathcal{T}(k)$. Then, $ \tilde{ \tau}_k (f) = 0$ if and only if $f \in \mathcal{T}(2k-1 )$.
\end{thm}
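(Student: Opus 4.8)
The plan is to translate the vanishing of the Morita homomorphism $\tilde{\tau}_k(f)$ into the vanishing of all the cup products $\alpha_s \smile \langle\alpha_{i_1},\dots,\alpha_{i_k}\rangle$ paired against the pushforward class $(\phi^\gamma_{f,k})_*([T^\gamma_f])$, and then recognize those pairings as Johnson homomorphism data in degrees $k,k+1,\dots,2k-2$. By Theorem~\ref{maainthm}(III), the 3-cocycles $\mathfrak{s}_\ell(\alpha_s\smile\langle\alpha_{i_1},\dots,\alpha_{i_k}\rangle)$ with $(i_1\cdots i_\ell)\in\mathfrak{U}_\ell$, $(i_1\cdots i_\ell s)\notin\mathfrak{U}_{\ell+1}$, form a basis of $H^3(F/F_k;\Z)$; hence $\tilde{\tau}_k(f)=0$ in $H_3(F/F_k;\Z)\cong\Hom(H^3(F/F_k;\Z),\Z)$ (all groups being free) if and only if every such basis cocycle annihilates $(\phi^\gamma_{f,k})_*([T^\gamma_f])$. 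By naturality of the cup product and of the pairing, and using the relation $p^*_\ell\circ\mathfrak{s}_\ell(\alpha_s\smile\langle\cdots\rangle)=\langle\alpha_s\smile\alpha_{i_1},\dots,\alpha_{i_k}\rangle$ from \eqref{bbbbbb}, this reduces to computing $\langle\,\alpha_s\smile\langle\alpha_{i_1},\dots,\alpha_{i_k}\rangle,\ (\phi^\gamma_{f,k})_*([T^\gamma_f])\,\rangle$ for all $\ell$ and all relevant indices.

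Next I would evaluate these pairings concretely. The group $\pi_1(T^\gamma_f)$ has presentation \eqref{m3a522} with relators $W_j=f_*(x_j)x_j^{-1}\in F_k$, so $T^\gamma_f$ is a closed 3-manifold with $\pi_1$ having $2g$ generators and $2g$ relators; its fundamental class $[T^\gamma_f]$ is built from these relators much as in the setup of Theorem~\ref{ma33t22} and Remark~\ref{nosa}. Working with the classifying map $\phi^\gamma_{f,k}$ to $F/F_k$, the cup product $\alpha_s\smile\beta$ paired with the 3-class is governed by the same Magnus-coefficient bookkeeping used in Theorem~\ref{maaint22}: one expects a formula expressing $\langle\,\alpha_s\smile\langle\alpha_{i_1},\dots,\alpha_{i_k}\rangle,\ [T^\gamma_f]\,\rangle$ in terms of $X_{i_1}\cdots X_{i_k}X_s$-type coefficients of $\mathcal{M}(f_*(x_j)x_j^{-1})$, summed over $j$ — in other words, in terms of the value of $\tau_m(f)$ for the appropriate $m$ with $k\le m\le 2k-2$. (The Fenn–Sjerve evaluation, Theorem~\ref{ma33t22}, together with the fact that a product of a 1-cocycle with a length-$k$ Massey product lives naturally on $F/F_\ell$ for $\ell$ up to $2k-2$, is exactly the tool that converts the 3-dimensional pairing into Magnus coefficients of the $W_j$.)

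Granting that dictionary, the theorem becomes a clean equivalence. If $f\in\mathcal{T}(2k-1)$, then $f_*(x_j)x_j^{-1}\in F_{2k-1}$, so $\mathcal{M}(f_*(x_j)x_j^{-1})$ has no terms of degree $\le 2k-2$, hence all the relevant coefficients vanish, all basis 3-cocycles annihilate $(\phi^\gamma_{f,k})_*([T^\gamma_f])$, and $\tilde{\tau}_k(f)=0$. Conversely, if $\tilde{\tau}_k(f)=0$ then all these Magnus coefficients in degrees $k$ through $2k-2$ vanish for every $j$; reading off degrees $k,k+1,\dots$ successively shows $f_*(x_j)x_j^{-1}\in F_{2k-1}$ for all $j$, i.e. $\tau_m(f)=0$ for all $m\le 2k-2$, which by Theorem~\ref{me22} gives $f\in\mathcal{T}(2k-1)$. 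I would also need to note that $\tilde{\tau}_k$ is the honest lift of $\tau_k$ (already recalled from \cite{Morita2,Heap}) so that the filtration-degree argument is legitimate, and to handle the boundary-filling step: passing from $T_{f,1}$ to $T^\gamma_f$ kills $\gamma$ and replaces the relators $[x_j,\gamma]f_*(x_j)x_j^{-1}$ by $f_*(x_j)x_j^{-1}$, which is precisely what makes the $\mathfrak{s}_\ell$-cocycles (built from $\alpha_s\smile\langle\cdots\rangle$ rather than from full $(k+1)$-fold Massey products) the right objects to pair against $[T^\gamma_f]$.

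The main obstacle I expect is the middle step: making the identification ``cup-product 3-cocycle paired with $[T^\gamma_f]$ $=$ a prescribed Magnus coefficient of $\sum_j\mathcal{M}(f_*(x_j)x_j^{-1})$'' completely precise across the whole range $k\le\ell\le 2k-2$, rather than just in the minimal case $\ell=k$. This requires knowing how the sections $\mathfrak{s}_\ell$ interact with the pushforward from $T^\gamma_f$ — essentially that the filtration $\mathcal{F}^\bullet H_3(F/F_k;\Z)$ of Igusa–Orr \eqref{mag112} is detected on $[T^\gamma_f]$ exactly by the Magnus degree of the relators — and a careful sign/coefficient chase through the defining-system formula \eqref{kihon4} and the cup product conventions. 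Everything else (freeness, dualization, the two directions of the degree argument) is then routine.
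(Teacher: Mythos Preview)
Your strategy is essentially the paper's: pair $\tilde\tau_k(f)$ against the basis of $H^3(F/F_k;\Z)$ from Theorem~\ref{maainthm}(III), reduce those 3-pairings to Johnson data, and invoke Theorem~\ref{me22}. The paper resolves the ``middle step'' you flag with two specific moves you are missing. First, the 3-pairing is reduced to a 2-pairing via the cap product identity $[T^\gamma_f]\cap\alpha_s=\iota_*^\gamma(\mathcal{X}_s)$ (valid because $H^*(T^\gamma_f;\Z)\cong H^*(S^1\times\Sigma_{g,0};\Z)$), so that
\[
\bigl\langle\,\alpha_s\smile\langle\alpha_{i_1},\dots,\alpha_{i_\ell}\rangle,\ (\phi^\gamma_{f,\ell})_*[T^\gamma_f]\,\bigr\rangle
=\bigl\langle\,\langle\alpha_{i_1},\dots,\alpha_{i_\ell}\rangle,\ \iota_*^\gamma(\mathcal{X}_s)\,\bigr\rangle,
\]
which by Proposition~\ref{maai322} is the $X_{i_1}\cdots X_{i_\ell}$-coefficient of $\mathcal{M}(f_*(x_s)x_s^{-1})$ --- not an $X_{i_1}\cdots X_{i_k}X_s$-coefficient summed over $j$ as you wrote. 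Second, the converse is run \emph{inductively in $\ell$}: at $\ell=k$ one uses Corollary~\ref{stst} (injectivity of $\alpha_s\smile\,\cdot\,$) to pass from vanishing of the basis 3-cocycles to vanishing of all the 2-pairings \eqref{ffff}, whence $\tau_k(f)=0$ and $f\in\mathcal{T}(k+1)$; this makes $\phi^\gamma_{f,k+1}$ available and the argument repeats for $\ell=k+1,\dots,2k-2$. With these two tools your outline becomes the paper's proof verbatim; no separate analysis of how $\mathfrak{s}_\ell$ interacts with the Igusa--Orr filtration is needed.
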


\begin{proof}
First, let us make some observations. Notice from this Proposition \ref{maai322} that $f \in \mathcal{T}(k+1 )$ if and only if the pairing $ [ \langle x_{j_1}^*, \dots, x_{j_{k}}^* \rangle, \iota_*^{\gamma}(\mathcal{X}_i) \ ] $ in \eqref{ffff} is zero for any standard indexes $j_1 \cdots j_k $. Furthermore, Theorem \ref{maainthm} (III) implies that $ \tilde{ \tau}_k (f) $ is zero if and only if the pairing
\begin{equation}\label{ff} \langle \mathfrak{s}_\ell ( x_s^* \smile \langle x_{j_1}^*, \dots, x_{j_\ell}^* \rangle ), \ \ (\phi^{\gamma}_{f,k} )_* ([T^{\gamma}_{f}]) \ \rangle \end{equation}
is zero for any sequences $(s, j_1, \cdots, j_{\ell}) $ in \eqref{bbbbb}, where we replace $\alpha_j$ by $x_j^* $.

We now complete the proof. Suppose $f \in \mathcal{T}(2k-1 )$. Then, we can define the Johnson homomorphism $\tau_\ell$ for $\ell \leq 2k-2$ and have $\tau_\ell=0 $. Hence,  all the pairing $ [ \langle x_{j_1}^*, \dots, x_{j_\ell}^* \rangle, \iota_*^{\gamma}( \mathcal{X}_s ) \ ] $ is zero. Note the cap product $ [T^{\gamma}_{f}] \cap \alpha_s = \iota_*^{\gamma}( \mathcal{X}_s ) $ because $H^*(T^{\gamma}_{f};\Z ) \cong H^*(S^1 \times \Sigma_{g,0};\Z ) $. Thus, the pairing in \eqref{ff} is zero. Hence, $ \tilde{ \tau}_k (f)=0 $. Conversely, let us assume $ \tilde{ \tau}_k (f)=0 $. Then, all of the pairing \eqref{ff} is zero. When $\ell =k$, Corollary \ref{stst} implies that the pairing \eqref{ffff} is zero. From Theorem \ref{maai322}, we have $\tau_k(f)=0$, i.e., $f \in \mathcal{T}(k+1)$; thus, we can define $\tau_{k+1}(f)$. In a similar way, we can define $\tau_{\ell}(f)$ and show $\tau_\ell(f)=0 $ for any $\ell \leq 2k-2$. Hence, we conclude $ f \in \mathcal{T}(2k-1 ) $, as desired.
\end{proof}

Finally, we mention some results on the Morita maps $\tilde{\tau}_k$.
Some papers \cite{Morita2,Heap,Day} studied the maps. 
Especially, Massuyeau \cite{Mas} showed an equivalence between
the map $\tilde{\tau}_k$ and some degrees of ``the total Johnson homomorphism", and
give a computation of $\tilde{\tau}_k$ with rational coefficients, in terms of a symplectic expansion.
However, our situation is with integral coefficients; if we can determine the homeomorphism type of $T_{f}^{\gamma}$, we can hope to compute $\tilde{\tau}_k(f)$ by using the 3-cocycles of Theorem \ref{maainthm} (III).

\section{Expressions of some 3-cocycles}
\label{So}
In general, it is practically important to give explicit expressions of group cocycles. This section focuses on quotient groups of $F/F_k$ and gives an algorithm to describe their 3-cocycles. As mentioned in the introduction, we regard the higher Massey products as an algorithm to produce cocycles. 

\subsection{3-cocycles of $F/F_k$}
\label{Sbra2}
First, let us focus on the group $F/F_k$ and give presentations of the 3-cocycles $\mathfrak{s}_k ( \alpha_{s}\smile \langle \alpha_{i_1}, \dots, \alpha_{i_{k}} \rangle ) $ in Theorem \ref{maainthm}, when $\ell = k$ and $\ell = k+1$ with $k \geq 3$.

Using the notation in \S 3, given an index $ i_1i_2 \cdots i_\ell$ and $s \in \mathbb{N}$, let us define the map,
$$ \Gamma_{ s i_1i_2 \cdots i_\ell} : (F/F_\ell)^3 \lra \Z ; \ \ \ \ \ \ (x,y,z) \longmapsto c_{s}(x) \bigl( \sum_{j: \ 1 \leq j \leq \ell -1} c_{i_1 i_{2}\cdots i_j} (y)c_{i_{j+1} \dots i_\ell} (z)\bigr). $$
The simplest case is when $\ell=k$, in which the 3-cocycle $\mathfrak{s}_k ( \alpha_{s}\smile \langle \alpha_{i_1}, \dots, \alpha_{i_{\ell}} \rangle ) $ is exactly presented as $\Gamma_{ s i_1i_2 \cdots i_k}$, since $\mathfrak{s}_k =\mathrm{id}$. Next, to get presentations of $\mathfrak{s}_\ell ( \alpha_{s}\smile \langle \alpha_{i_1}, \dots, \alpha_{i_{\ell}} \rangle ) $ with $ k< \ell\leq 2k-2$, it is enough to explicitly give a function $\mathfrak{b}: (F/F_\ell )^2 \ra \Z $ such that the difference $\Gamma_{ s i_1i_2 \cdots i_\ell} - \partial^*_2(\mathfrak{b}) $ is a restricted map $(F/F_k)^3 \ra \Z $.

For example, we will describe the case $\ell =k+1$. Define the map $\mathfrak{b}$ by setting
$$ \mathfrak{b}(x,y)= c_{s} (x) c_{i_1 \cdots i_{k+1}} (y) c_{i_{k+2}} (y)+ c_{s i_1} (x) c_{i_2 \cdots i_{k+2}} (y) + c_{s i_{k+2}} (x) c_{i_1 \cdots i_{k+1}} (y) . $$
Then, as a result of the difference $( \Gamma_{ s i_1i_2 \cdots i_{k+1}}-\partial^*_2 \mathfrak{b} ) (x,y,z)$, we obtain

\begin{prop}\label{maainthm22}
The cohomology 3-class $\alpha_{s}\smile \langle \alpha_{i_1}, \dots, \alpha_{i_{k}} \rangle$ is represented by $\Gamma_{ s i_1i_2 \cdots i_k}$.

When $\ell =k+1$, the 3-class $\mathfrak{s}_{k+1} ( \alpha_{s}\smile \langle \alpha_{i_1}, \dots, \alpha_{i_{k+1}} \rangle )$ is represented by the map:
\[(x,y,z) \longmapsto \sum_{\ell=2}^{k} \Bigl( c_{s} (x) \bigl( c_{i_1 \cdots i_{\ell}} (y) c_{i_{\ell+1} \cdots i_{k+1}} (z) - c_{i_1 \cdots i_{\ell-1}} (y) c_{i_{\ell} \cdots i_{k+1}} (z) ( c_{i_{k+1}} (y) + c_{i_{k+1}} (z)) \bigr) \] \[ \ \ \ \ \ \ \ \ \ \ \ \ \ \ \ \ \ \ \ \ \ \ - c_{s i_1} (x) c_{i_2 \cdots i_{\ell}} (y) c_{i_{\ell+1} \cdots i_{k+1}} (z) - c_{s i_{k+1}} (x) c_{i_2 \cdots i_{\ell-1}} (y) c_{i_{\ell} \cdots i_{k}} (z) \Bigr) . \]
Since the length of every sequence in each term is less than $k$, this map $\Gamma_{s i_1i_2 \cdots i_{k+1}}-\partial^*_2 \mathfrak{b} $ can be regarded as a map from $ (F/F_k)^3 $.
\end{prop}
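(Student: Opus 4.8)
## Proof proposal for Proposition \ref{maainthm22}

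The plan is to verify the two assertions by direct cochain computation, using Lemma \ref{maainlem} as the starting point for the $\ell = k$ case and then controlling the coboundary correction for $\ell = k+1$. First I would handle the case $\ell = k$: here $\mathfrak{s}_k = \mathrm{id}$ by construction in Theorem \ref{maainthm} (III), so what must be checked is that the cup product $\alpha_s \smile \langle \alpha_{i_1},\dots,\alpha_{i_k}\rangle$ is represented at the cochain level by $\Gamma_{s i_1 \cdots i_k}$. By Theorem \ref{maainthm} (I), $\langle \alpha_{i_1},\dots,\alpha_{i_k}\rangle$ is represented by the explicit 2-cocycle $(y,z) \mapsto \sum_{j} c_{i_1\cdots i_j}(y) c_{i_{j+1}\cdots i_k}(z)$ from \eqref{mag139}, and $\alpha_s$ is represented by the 1-cocycle $c_s$. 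Applying the cup-product formula $(u \smile v)(g_1,\dots,g_{p+q}) = (-1)^{pq} u(g_1,\dots,g_p)\, v(g_{p+1},\dots,g_{p+q})$ with $p=1$, $q=2$ gives exactly $(x,y,z) \mapsto c_s(x)\sum_j c_{i_1\cdots i_j}(y) c_{i_{j+1}\cdots i_k}(z) = \Gamma_{s i_1\cdots i_k}(x,y,z)$, up to the irrelevant sign $(-1)^{1\cdot 2}=1$. This settles the first sentence.

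For $\ell = k+1$, the strategy is the one already sketched in the text preceding the statement: exhibit an explicit 2-cochain $\mathfrak{b}: (F/F_{k+1})^2 \to \Z$ so that $\Gamma_{s i_1\cdots i_{k+1}} - \partial_2^* \mathfrak{b}$, although a priori defined on $(F/F_{k+1})^3$, only involves the functions $c_J$ for sequences $J$ of length $< k$, hence factors through $(F/F_k)^3$ via $p_{k+1}$. One then identifies this factored cochain with $\mathfrak{s}_{k+1}(\alpha_s \smile \langle \alpha_{i_1},\dots,\alpha_{i_{k+1}}\rangle)$. Concretely I would take $\mathfrak{b}$ as prescribed,
\[ \mathfrak{b}(x,y) = c_s(x) c_{i_1\cdots i_{k+1}}(y) c_{i_{k+2}}(y) + c_{s i_1}(x) c_{i_2\cdots i_{k+2}}(y) + c_{s i_{k+2}}(x) c_{i_1\cdots i_{k+1}}(y), \]
compute $\partial_2^* \mathfrak{b}$ using the coboundary formula $(\partial_2^* \mathfrak{b})(x,y,z) = \mathfrak{b}(y,z) - \mathfrak{b}(xy,z) + \mathfrak{b}(x,yz) - \mathfrak{b}(x,y)$, and subtract. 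The crucial input is the (near-)multiplicativity of the maps $c_J$: since $\mathcal{M}$ is a ring homomorphism into $\Z\langle X_1,\dots,X_q\rangle/\mathcal{J}_{k+1}$, one has $c_J(xy) = \sum_{J = J' J''} c_{J'}(x) c_{J''}(y)$ whenever $|J| \le k$ (with the conventions $c_\emptyset = 1$ and the empty-prefix/suffix terms giving back $c_J(x)$ and $c_J(y)$), and similarly modulo $\mathcal{J}_{k+1}$ for $|J| = k+1$. Feeding these expansions into $\partial_2^* \mathfrak{b}$ and collecting terms should produce exactly the degree-$(k+1)$ part of $\Gamma_{s i_1\cdots i_{k+1}}$ plus lower-degree leftovers, so that the difference is the displayed sum over $\ell$ from $2$ to $k$, in which every subscript sequence has length at most $k-1$.

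The main obstacle is the bookkeeping in this last step: one must track how the three summands of $\mathfrak{b}$, each split by $c_J(xy)$-expansions, interact, and verify that all sequences of length exactly $k$ (such as $i_1\cdots i_k$ or $i_2\cdots i_{k+1}$) cancel while the length-$(k+1)$ terms of $\Gamma_{s i_1\cdots i_{k+1}}$ are themselves absorbed. The sign conventions in $\partial_2^*$ and the placement of the $c_{i_{k+1}}(y)$ versus $c_{i_{k+1}}(z)$ correction terms are where errors are most likely, so I would organize the computation by fixing the output arguments $(x,y,z)$ and comparing coefficients of each monomial $c_A(x) c_B(y) c_C(z)$ on both sides. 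Once the difference is shown to match the displayed formula term by term, the observation that every sequence appearing has length $< k$ — hence $c_J$ factors through $\mathcal{M}: F/F_k \hookrightarrow \Z\langle X_1,\dots,X_q\rangle/\mathcal{J}_k$ — immediately gives the final claim that the cochain descends to $(F/F_k)^3$, and functoriality of the cup product together with the defining property \eqref{bbbbbb} of $\mathfrak{s}_{k+1}$ identifies it with the asserted representative. I do not expect to need any tool beyond Lemma \ref{maainlem}, the cup-product and coboundary formulas recalled in Section \ref{Semddb}, and the homomorphism property of $\mathcal{M}$.
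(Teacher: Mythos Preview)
Your proposal is correct and follows essentially the same approach as the paper: the paper's argument, given in the text immediately preceding the proposition, also invokes $\mathfrak{s}_k=\mathrm{id}$ together with the cocycle \eqref{mag139} for the first claim, and then defines the same cochain $\mathfrak{b}$ and asserts that the displayed formula is the result of computing $\Gamma_{s i_1\cdots i_{k+1}}-\partial_2^*\mathfrak{b}$ directly. Your writeup simply makes explicit the ingredients (the cup-product formula, the coboundary formula, and the multiplicativity $c_J(xy)=\sum_{J=J'J''}c_{J'}(x)c_{J''}(y)$ coming from the homomorphism property of $\mathcal{M}$) that the paper leaves implicit.
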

Here, we should mention that while Igusa and Orr \cite[\S 10]{IO} express the 3-cocycles $\alpha_{s}\smile \langle \alpha_{i_1}, \dots, \alpha_{i_{k}} \rangle $ in terms of Igusa's picture, our description using Massey products is simpler and compatible with the non-homogenous complex of $G$.

Concerning the higher case $\ell > k+1$, the author attempted to describe the 3-cocycles, but made little progress.

\subsection{Quotient groups by central elements}
\label{So1}
This subsection deals with the situation in \S \ref{Sec3222} or \cite{FS}. Namely, we fix central elements $W_1 ,\dots, W_t \in F_k/F_{k+1}$ and let $G$ be the quotient group of $F/F_{k+1} $ subject to $W_1 ,\dots, W_t $. For simplicity, let us assume $k>2$ and that there are standard sequences $ I^{(j)}= (i_1^{(j)},\dots, i_{k}^{(j)}) \in \mathfrak{U}_{k}$ with $j \leq s $, which are mutually distinct, and that $W_j$ corresponds to the Massey product $ \langle \alpha_{i_1^{(j)}} ,\dots, \alpha_{i_{k}^{(j)}} \rangle$ of length $k$. Such an assumption appears in discussions on higher Milnor invariant; see \cite{Tur,KN}.

Then, as in \eqref{mag1231}, we can easily check that the following map is well-defined and a 2-cocycle:
$$ \phi_{\Lambda_j}: G \times G \lra \Z ; \ \ \ ([X],[Y]) \longmapsto \sum_{\ell=1}^{k-1} c_{i_1^{(j)} i_{2}^{(j)}\cdots i_\ell^{(j)}} (X) c_{i_{\ell+1}^{(j)} \cdots i_{k}^{(j)}} (Y),
$$
where we represent any element of $G $ by the representative from $F/F_k.$ Here, we should mention the 5-term exact sequence from the central extension $ F/F_k\ra G$:
$$0 \ra H^1 (G;\Z) \stackrel{\cong}{\lra} H^1 (F/F_k ;\Z ) \lra \Z^{m} \stackrel{\delta^*}{\lra} H^2 (G;\Z ) \ra H^2 (F/F_k ;\Z ). $$
Actually, we can verify that the 2-cocycle $ \phi_{\Lambda_j}$ corresponds to the image of $\delta^*.$

Now let us give some 3-cocycles of $G$.

\begin{prop}\label{mad222}
Let $G$ be the group $F/\langle F_k, W_1 ,\dots, W_s \rangle $, as above. Fix $r,s\in \{ 1, \dots, q\}$ such that $(r i_1^{(j)}\dots i_{k-1}^{(j)}) $ and $(i_2^{(j)}\dots i_{k}^{(j)}s) $ are different from other indexes $i_1^{(j')} \cdots, i_{k}^{(j')}$ for $j'\leq s $. Then, the Massey product $ \langle \alpha_r, \phi_{\Lambda_j} , \alpha_s \rangle $ is defined and is represented by the map,
$$(x,y,z) \longmapsto c_r(x) \bigl( \sum_{\ell: \ 1 \leq \ell<k}\!\!\! c_{ i_1^{(j)} \cdots i_{\ell}^{(j)}} (y)c_{i_{\ell+1}^{(j)} \cdots i_k^{(j)} s} (z) \bigr) - \bigl( \sum_{\ell : \ 1 <\ell \leq k}\!\!\! c_{r i_1 ^{(j)} \cdots i_\ell^{(j)}} (x)c_{i_{\ell+1}^{(j)} \cdots i_k^{(j)}} (y) \bigr) c_s(z). $$
\end{prop}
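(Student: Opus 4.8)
The plan is to produce an explicit defining system for the triple Massey product $\langle \alpha_r, \phi_{\Lambda_j}, \alpha_s\rangle$ over $G$ and then read off the associated $3$-cocycle from formula \eqref{kihon4}. First I would check that the product is actually defined: the $3$-fold Massey product $\langle \gamma_1,\gamma_2,\gamma_3\rangle$ of cochains of degrees $1,2,1$ requires that $\gamma_1\smile\gamma_2$ and $\gamma_2\smile\gamma_3$ be nullcohomologous, so I need cochains $a_{1,2}$ and $a_{2,3}$ with $\partial^* a_{1,2} = \alpha_r\smile\phi_{\Lambda_j}$ (up to sign) and $\partial^* a_{2,3} = \phi_{\Lambda_j}\smile\alpha_s$. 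The natural candidates, coming from the structure of the Magnus expansion exactly as in Lemma \ref{maainlem}, are
\[ a_{1,2}(x) = \sum_{1\le\ell\le k}c_{r\, i_1^{(j)}\cdots i_\ell^{(j)}}(x), \qquad a_{2,3}(x) = \sum_{1\le\ell\le k}c_{i_\ell^{(j)}\cdots i_k^{(j)}\, s}(x), \]
interpreted on representatives from $F/F_{k+1}$. The hypothesis that $(r\,i_1^{(j)}\cdots i_{k-1}^{(j)})$ and $(i_2^{(j)}\cdots i_k^{(j)}\,s)$ differ from the sequences $I^{(j')}$ that have been killed in passing to $G$ is exactly what guarantees these $c$'s descend to well-defined functions on $G$ (otherwise relations among the $W_{j'}$ would interfere). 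Verifying conditions (i)–(iii) of a defining system then reduces, as in the proof of Lemma \ref{maainlem}, to a direct upper-triangular matrix computation using $\Upsilon_{k+1}$, together with $a_{1,1}=\alpha_r$, $a_{2,2}=\phi_{\Lambda_j}$, $a_{3,3}=\alpha_s$.

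Next I would substitute this defining system into \eqref{kihon4}: with $n=3$, the associated $3$-cochain is
\[ a_{1,1}\smile a_{2,3} \;-\; a_{1,2}\smile a_{3,3} \]
up to the signs $(-1)^{p_1+\cdots+p_r-r+1}$ prescribed there, which for degrees $(1,2,1)$ work out to give precisely the difference appearing in the statement. Expanding $a_{1,1}\smile a_{2,3}$ gives $c_r(x)\cdot\sum_\ell c_{i_\ell^{(j)}\cdots i_k^{(j)} s}(z)$ and expanding $a_{1,2}\smile a_{3,3}$ gives $\bigl(\sum_\ell c_{r i_1^{(j)}\cdots i_\ell^{(j)}}(x)\bigr)\cdot c_s(z)$; after reindexing the summation range (so that the first sum runs over $1\le\ell<k$ applied to $(y,z)$ via the cocycle $\phi_{\Lambda_j}$ sitting in the middle slot, and the second over $1<\ell\le k$) one obtains exactly the displayed map. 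The cup-product sign convention $(u\smile v)(g_1,\dots,g_{p+q})=(-1)^{pq}u(\cdots)v(\cdots)$ must be tracked carefully here, but it is bookkeeping.

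The main obstacle I anticipate is the well-definedness check over the quotient group $G$ rather than over $F/F_{k+1}$: one must confirm that each $c_I$ occurring in $a_{1,2}$, $a_{2,3}$, and in the final cocycle is constant on the cosets of $\langle W_1,\dots,W_s\rangle$, i.e. that evaluating on two different representatives from $F/F_{k+1}$ gives the same answer. This is where the genericity hypothesis on $r,s$ is used in an essential way, since $c_{I^{(j')}}(W_{j'})\ne 0$ and one must ensure none of the relevant index sequences coincides with such an $I^{(j')}$; sequences of length $<k$ are automatically unproblematic because $W_{j'}\in F_k$, so only the length-$k$ sequences $r i_1^{(j)}\cdots i_{k-1}^{(j)}$ and $i_2^{(j)}\cdots i_k^{(j)} s$ need the hypothesis. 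Once well-definedness is secured, the rest is the routine matrix computation and sign bookkeeping described above.
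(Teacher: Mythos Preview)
Your overall strategy---build an explicit defining system by exhibiting primitives for $\alpha_r\smile\phi_{\Lambda_j}$ and $\phi_{\Lambda_j}\smile\alpha_s$, then read off the Massey product from \eqref{kihon4}---is exactly the paper's approach. But your candidates for $a_{1,2}$ and $a_{2,3}$ have the wrong degree. Here $p_1=1$, $p_2=2$, $p_3=1$, so condition (i) forces $a_{1,2}\in C^{p_1+p_2-1}(G;\Z)=C^2(G;\Z)$ and likewise $a_{2,3}\in C^2(G;\Z)$; they are functions of \emph{two} group elements, not one. Your $a_{1,2}(x)=\sum_\ell c_{r i_1\cdots i_\ell}(x)$ is a $1$-cochain, and $\partial^*$ of a $1$-cochain is a $2$-cochain, which cannot equal the $3$-cochain $\alpha_r\smile\phi_{\Lambda_j}$. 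This is why your expansion of $a_{1,1}\smile a_{2,3}$ produced something depending only on $(x,z)$ and you had to hand-wave about ``reindexing'' to get a $y$ in there.

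The correct primitives are the $2$-cochains
\[
a_{1,2}(x,y)=\sum_{1\le\ell<k} c_{r i_1^{(j)}\cdots i_\ell^{(j)}}(x)\,c_{i_{\ell+1}^{(j)}\cdots i_k^{(j)}}(y),
\qquad
a_{2,3}(x,y)=\sum_{1<\ell\le k} c_{i_1^{(j)}\cdots i_\ell^{(j)}}(x)\,c_{i_{\ell+1}^{(j)}\cdots i_k^{(j)} s}(y),
\]
and one checks directly (the same upper-triangular computation you invoke) that $\partial^* a_{1,2}=\alpha_r\smile\phi_{\Lambda_j}$ and $\partial^* a_{2,3}=\phi_{\Lambda_j}\smile\alpha_s$. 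Plugging these into \eqref{kihon4} gives the displayed $3$-cocycle immediately, with no reindexing needed. Your well-definedness analysis is correct and transfers verbatim: the only length-$k$ sequences occurring in $a_{1,2}$ and $a_{2,3}$ are $r i_1^{(j)}\cdots i_{k-1}^{(j)}$ and $i_2^{(j)}\cdots i_k^{(j)} s$, which is precisely where the hypothesis is used.
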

\begin{proof}
Notice the equalities,
$$\alpha_r \smile \phi_{\Lambda_j} = \partial^* \bigl( \sum_{\ell: \ 1 \leq \ell<k}\!\!\! c_{r i_1^{(j)} \cdots i_\ell^{(j)}} (x)c_{i_{\ell+1}^{(j)} \cdots i_k^{(j)}} (y) \bigr), \ \ \ \ \ \ \ \
\phi_{\Lambda_j} \smile \alpha_s= \partial^* \bigl( \sum_{\ell: \ 1 < \ell \leq k}\!\!\! c_{ i_1^{(j)} \cdots i_\ell^{(j)}} (x)c_{i_{\ell+1}^{(j)} \cdots i_k^{(j)} s} (y) \bigr).
$$
Then, from the definition of the triple Massey product, we have a representative.
\end{proof}

\subsection*{Acknowledgments}
The author expresses his gratitude to Teruaki Kitano and
Gw\'{e}na\"{e}l Massuyeau for valuable comments on this paper.
He is also indebted to anonymous referees of an earlier version of this paper for providing insightful comments.

\appendix
\section{Cocycles in de Rham complexes of the iterated torus bundle}
\label{App}
We will describe the 2-cocycles in Theorem \ref{maainthm} as differential forms in de Rham complexes.

First, when $G=F/F_k$, we can verify, by induction on $k$, that the Eilenberg-MacLane space $B(F/F_k)$ can be realized as a $C^{\infty}$-manifold. Precisely, since $B (F_{k}/F_{k+1})$ is the product of $N_k$-copies of $S^1$, the geometric realization of \eqref{kihon2} implies that $B(F/F_{k+1} )$ is a universal torus bundle over $B(F/F_{k}) $.

We will address the manifold structure of $B(F/F_{k})$ in detail. The Heisenberg (triangular) group is a good toy model from the viewpoint of the unipotent Magnus expansion $\Upsilon_k $. Here, we should review generalized shuffles and the image of the Magnus expansion $\mathcal{M}$. As in \cite[\S 2]{CFL}, a sequence $(c_1c_2 \cdots c_k) \in \{ 1,\dots ,q\}^k$ is called {\it the resulting shuffle of two sequences} $I=a_1 a_2 \cdots a_{|I|} $ and $J=b_1b_2 \cdots b_{|J|}$ if there are $|I|$ indices $\alpha (1), \alpha(2), \dots ,\alpha(|I|)$ and $|J|$ indices $\beta (1), \beta (2), \dots, \beta (|J|)$ such that
\begin{enumerate}[(i)]
\item $ 1 \leq \alpha (1) < \alpha (2)< \cdots < \alpha (|I|) \leq k,$ \ and $ \
\ 1 \leq \beta (1) < \beta (2)< \cdots < \beta (|J|) \leq k$,
\item $c_{\alpha(i)}= a_i$ and $c_{\beta(j)}= b_j$ for all $i \in \{ 1,2, \dots, |I|\}$ \ $j \in \{ 1,2, \dots, |J|\} $,
\item each index $s \in \{ 1, 2, \dots ,k \}$ is either an $\alpha (i)$ for some $i$ or a $\beta(j)$ for some $j$ or both.
\end{enumerate}
Let the symbol $\mathrm{Sh} (I,J)$ denote the set of the resulting shuffles of $I$ and $J$. Thanks to \cite[Theorem 3.9]{CFL}, the image of the Magnus expansion $\mathcal{M} $ completely characterizes ``the generalized shuffle relation". In fact, the image in $\Z \langle X_1,\dots, X_q \rangle /\mathcal{J}_{k} $ is realized as
\begin{equation}\label{image} \Bigl\{ \sum_{ I= (i_1 \cdots i_n)} a_I \cdot X_{i_1} \cdots X_{i_n} \ \Bigl| \ \mathrm{For \ any \ indexes} \ J \mathrm{\ and \ } K, \ \ \ a_J \cdot a_K =\sum_{L \in \mathrm{Sh}(J,K)} a_L \ \Bigr\}.
\end{equation}

We further examine the $\R$-extension of the image and state Theorem \ref{32}. Let us consider the real extension $\R \langle X_1,\dots, X_q \rangle /\mathcal{J}_{k} = \R \otimes \Z \langle X_1,\dots, X_q \rangle /\mathcal{J}_{k} $ and identify it with the Euclid space of dimension $\sum_{\ell=0}^{k-1} q^\ell.$ 
Furthermore, we will define a subspace, $\mathrm{Im}(\mathcal{M}_{\R,k})$, of $\R \langle X_1,\dots, X_q \rangle /\mathcal{J}_{k} $ as follows.
Let $\mathrm{Im}(\mathcal{M}_{\R,k})$ with $k=2 $ be $\R \langle X_1,\dots, X_q \rangle /\mathcal{J}_{2} \cong \R^q$.  Supposing the definition of $\mathrm{Im}(\mathcal{M}_{\R,k-1}) $, we define $\mathrm{Im}(\mathcal{M}_{\R,k}) \subset \R \langle X_1,\dots, X_q \rangle /\mathcal{J}_{k} $ by 
\begin{equation}\label{image2} \notag  \Bigl\{ \sum_{ I= (i_1 \cdots i_n)} \!\!\!a_I \cdot X_{i_1} \cdots X_{i_n} \ \Bigl| 
\ \begin{array}{l}
\ a_{i_1 \cdots i_{n}} \in \mathcal{M}_{\R,k-1}, \ \mathrm{if \  } n <k.   \\
\mathrm{For \ any } \ J \mathrm{\ and \ } K \ \mathrm{ with }\ |J| +|K| = n, \ \ \ a_J \cdot a_K =\sum_{L \in \mathrm{Sh}(J,K)} a_L \
\end{array}  \Bigr\} .
\end{equation}
Then, as in the fact \cite[\S 3]{CFL} that the shuffle ration is closed under the multiplication of $(\Z[X_1,\dots, X_q \rangle /\mathcal{J}_{k})^{\times  }$, so is the closed set $\mathrm{Im}(\mathcal{M}_{\R,k}) $ 
under that of $(\R[X_1,\dots, X_q \rangle /\mathcal{J}_{k})^{\times  }$. Hence, this $\mathrm{Im}(\mathcal{M}_{\R,k}) $ is a Lie group, which contains $ \mathrm{Im }(\mathcal{M}_k)$ as a lattice. In other words, $ F/F_k $ acts freely and properly on $ \mathrm{Im }(\mathcal{M}_k)$. 
\begin{thm}\label{32}
The quotient space of $\mathrm{Im}(\mathcal{M}_{\R,k})$ subject to the free action of $ F/F_k $ is a closed connected $C^{\infty}$-manifold and is an Eilenberg-MacLane space of $ F/F_k$.
\end{thm}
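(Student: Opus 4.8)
\textbf{Proof proposal for Theorem \ref{32}.}

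The plan is to verify three things: that $\mathrm{Im}(\mathcal{M}_{\R})$ is contractible, that the action of $F/F_k$ on it (via the Magnus embedding $\mathcal{M}: F/F_k \hookrightarrow \Z\langle X_1,\dots,X_q\rangle/\mathcal{J}_k$, followed by left multiplication in the truncated group ring) is free and properly discontinuous with cocompact quotient, and that the quotient is a smooth closed manifold. Granting these, the quotient is a $K(F/F_k,1)$: contractibility of the total space gives $\pi_i = 0$ for $i \geq 2$, properness and freeness give $\pi_1 \cong F/F_k$. The contractibility is already noted in the excerpt — the shuffle relations \eqref{image} cut out an affine-linear subspace of the Euclidean space $\R\langle X_1,\dots,X_q\rangle/\mathcal{J}_k$ (the defining equations $a_J a_K = \sum_{L\in\mathrm{Sh}(J,K)} a_L$ are polynomial, but restricted to the locus with constant term $1$ they become solvable recursively by degree, expressing higher coefficients as polynomials in lower ones), so $\mathrm{Im}(\mathcal{M}_{\R})$ is diffeomorphic to a Euclidean space of dimension $\sum_{\ell=1}^{k-1} N_\ell$ (coordinates indexed by standard sequences, by \cite[Theorem 3.9]{CFL}).

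The steps, in order. First I would set up the group-like elements: $\mathrm{Im}(\mathcal{M}_{\R})$ is precisely the set of group-like elements (constant term $1$, satisfying shuffle) in the truncated real group ring, and it is a Lie group under the truncated multiplication, with $\mathcal{M}(F/F_k)$ a discrete subgroup. Freeness of the left-translation action is immediate since we are inside a group and $\mathcal{M}$ is injective on $F/F_k$. Second, I would argue that $\mathcal{M}(F/F_k)$ is a \emph{cocompact lattice} in this Lie group: this is the standard fact that a finitely generated torsion-free nilpotent group sits as a cocompact lattice in its real Malcev completion, and here the real Malcev completion of $F/F_k$ is exactly $\mathrm{Im}(\mathcal{M}_{\R})$ (the group-like elements). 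Properly discontinuous action of a lattice on a Lie group by translation is classical. Third, cocompactness of the lattice plus smoothness of the Lie group gives that the quotient is a closed smooth manifold; connectedness follows from connectedness of $\mathrm{Im}(\mathcal{M}_{\R})$. Finally, assembling: the quotient map $\mathrm{Im}(\mathcal{M}_{\R}) \to \mathrm{Im}(\mathcal{M}_{\R})/(F/F_k)$ is a covering with contractible total space, so the quotient is aspherical with fundamental group $F/F_k$, i.e.\ a $K(F/F_k,1)$, matching the inductive torus-bundle description $B(F/F_{k+1}) \to B(F/F_k)$ from the paragraph preceding the statement.

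The main obstacle — really the only point requiring care rather than invocation — is identifying $\mathrm{Im}(\mathcal{M}_{\R})$ with the real Malcev completion of $F/F_k$ and checking that $\mathcal{M}(F/F_k)$ is genuinely \emph{cocompact} in it, rather than just discrete. One clean way to see cocompactness directly: the unipotent matrix model $\Upsilon_k: F/F_k \hookrightarrow GL_k(\Omega_k)$ from Section \ref{Semddb} realizes $F/F_k$ inside the integer points of a unipotent algebraic group $U$ defined over $\Q$, and $\Upsilon_k(F/F_k)$ is commensurable with $U(\Z)$, which is a cocompact lattice in $U(\R)$ by the Borel–Harish-Chandra / Malcev theory of unipotent groups; the equivalence of $\Upsilon_k$ with $\mathcal{M}$ established in \cite[Appendix]{KN} then transports this to $\mathrm{Im}(\mathcal{M}_{\R})$. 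Alternatively one builds a compact fundamental domain by hand, iterating the torus-bundle structure: at each stage $\mathrm{Im}(\mathcal{M}_{\R})$ fibers over the lower stage with fiber $\R^{N_k}$ on which $F_k/F_{k+1} \cong \Z^{N_k}$ acts by translation, and a product of fundamental intervals gives a compact transversal; induction on $k$ closes the argument. Either route is routine modulo the standard nilpotent-lattice theory, so I would state the identification as a lemma, cite the Malcev completion, and keep the proof short.
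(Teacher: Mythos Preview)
Your argument is correct and considerably more complete than what the paper offers. The paper does not give a proof block for this theorem at all: it simply precedes the statement with the remarks that the shuffle relation is ``linear'' (hence $\mathrm{Im}(\mathcal{M}_{\R})$ is a contractible subspace) and that $F/F_k$ acts freely, and then asserts the theorem. The inductive torus-bundle picture in the opening paragraph of the appendix is the only further justification, and cocompactness of the action --- needed for the word ``closed'' --- is never addressed.

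Your route differs in that you actually supply the missing structure: you identify $\mathrm{Im}(\mathcal{M}_{\R})$ with the real Malcev completion of $F/F_k$ (equivalently, the group-like elements in the truncated ring), and then invoke the standard fact that a finitely generated torsion-free nilpotent group is a cocompact lattice in its Malcev completion, or alternatively the unipotent matrix model $\Upsilon_k$ together with Borel--Harish-Chandra. This is genuinely more than the paper does. You also handle a point the paper glosses over: the shuffle relations $a_J\cdot a_K = \sum_{L\in\mathrm{Sh}(J,K)} a_L$ are quadratic, not linear as the paper claims, and you correctly explain that the locus is nonetheless diffeomorphic to Euclidean space by solving recursively in the degree (equivalently, via the exponential from the free nilpotent Lie algebra). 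Your second route to cocompactness --- building a fundamental domain by iterating the torus-bundle fibration --- is exactly the paper's implicit picture made precise, so in the end both approaches are present in your write-up; the Malcev-lattice argument is cleaner and what the paper ought to have said.
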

\begin{proof}It is enough to show that $\mathrm{Im}(\mathcal{M}_{\R,k})$ is contractible, 
by induction on $k$. 
First, if $k=2$, $\mathrm{Im}(\mathcal{M}_{\R,k})$ is homeomorphic to $\R^q$ by definition. 
Next, consider the projection 
$p_k: \mathrm{Im}(\mathcal{M}_{\R,k}) \ra \mathrm{Im}(\mathcal{M}_{\R,k-1}) $.
For any $ x\in \mathrm{Im}(\mathcal{M}_{\R,k-1}) $, $p^{-1}_k(x)$ 
can be regarded as the subspace of solutions of a linear equation, by definition of the shuffle relation. Moreover, the rank of the subspace does not depend on $x$, by \eqref{kihon2}. 
Thus, $p_k$ is a $\R$-vector bundle. Hence, $\mathrm{Im}(\mathcal{M}_{\R,k})$ is contractible, by induction. 
\end{proof}

As a result of Theorem \ref{32}, every $s$-form of $BF/F_k$ is identified with an $ F/F_k$-invariant $s$-form of $\mathrm{Im}(\mathcal{M}_{\R,k})$. Thus, via the de Rham theorem, we will describe the basis of $H^*( F/F_k;\R ) \cong H^*_{\rm dR}( BF/F_k) $ as $ F/F_k$-invariant $s$-forms of $\mathrm{Im}(\mathcal{M}_{\R,k})$ as follows:

To state Lemmas \ref{maas22} and \ref{ma3188}, we need some terminology. Consider the cotangent bundle of $\R \langle X_1,\dots, X_q \rangle /\mathcal{J}_{k} $, and denote by $d X_{j_1 \dots j_t}$ the dual basis corresponding to the coordinate $ X_{j_1} X_{j_2} \cdots X_{j_t} $. Following the pullback, we regard the basis as 1-forms in $T^* \mathrm{Im}(\mathcal{M}_{\R,k})$. Furthermore, for $s \in \{ 1, \dots, q\}$, the 1-form $d X_{s} $ on $\mathrm{Im}(\mathcal{M}_{\R,k})$ is $ F/F_k$-invariant, and the resulting 1-cocycle in $\bigwedge^1B F/F_k$ corresponds to the $s$-th summand of the abelianization $\alpha_s$. Extend the map $\beta_{i_u i_{u+1} \dots i_v}$ in \eqref{mag1231} as $ \R \langle X_1,\dots, X_q \rangle /\mathcal{J}_{k} \ra \R$. In addition, for $(t,k_0) \in \mathbb{N}^2 $, we prepare a set of the form,
$$ \mathcal{S}_{t,k_0}:= \{ \ (k_1, \dots,k_u ) \in \mathbb{N}^u \ | \ u \geq 1, \ \ \ k_0+ k_1 + \cdots + k_u =t \ \}. $$

\begin{lem}\label{maas22}
Fix an index $(j_1 , \dots, j_t) \in \{ 1, \dots, q\}^{t}$. Define the 1-form of the formula,
$$ \sum_{k_0=1}^t ( \sum_{(k_1, \dots, k_u) \in \mathcal{S}_{t,k_0},}\!\! (-1)^{u} \!\! \prod_{w : \ 1 \leq w \leq u} \beta_{j_{1+k_0+k_1 + \cdots + k_{w-1}} j_{2+k_0+k_1 + \cdots + k_{w-1}} \cdots j_{k_0+k_1 + \cdots + k_{w}}} d X_{j_1 \cdots j_{k_0}} ) .$$
We denote this 1-form by $\gamma_{j_1 \cdots j_t}$. This 1-form is $F/F_k$-invariant.
\end{lem}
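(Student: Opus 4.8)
The plan is to verify $F/F_k$-invariance of the $1$-form $\gamma_{j_1\cdots j_t}$ directly, by unwinding how $F/F_k$ acts on $\mathrm{Im}(\mathcal{M}_{\R})$ through the Magnus embedding. Concretely, for $g\in F/F_k$, the action is left multiplication by $\mathcal{M}(g)=1+\sum_s c_s(g)X_s+\cdots$ in the truncated free algebra, so the pulled-back coordinate functions transform by $\beta_{j_1\cdots j_r}\circ L_g = \sum_{a=0}^{r} (\text{coefficient of }X_{j_1}\cdots X_{j_a}\text{ in }\mathcal{M}(g))\cdot \beta_{j_{a+1}\cdots j_r}$, i.e. the coordinate $X_{j_1}\cdots X_{j_r}$ is sent to a sum $\sum_{a} c_{j_1\cdots j_a}(g)\,X_{j_{a+1}\cdots j_r}$ (with the convention $c_\emptyset(g)=1$). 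The first step is therefore to record this transformation law precisely, including how it acts on the $1$-forms $dX_{j_1\cdots j_r}$: since the $c_{j_1\cdots j_a}(g)$ are constants (functions of $g$ only, not of the point of $\mathrm{Im}(\mathcal{M}_{\R})$), we get $L_g^* (dX_{j_1\cdots j_r}) = \sum_{a=0}^{r} c_{j_1\cdots j_a}(g)\, dX_{j_{a+1}\cdots j_r}$.

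The second step is to substitute this into the defining formula for $\gamma_{j_1\cdots j_t}$ and show the result is unchanged. The formula is an alternating sum over the compositions $\mathcal{S}_{t,k_0}$ of $t$ with first part $k_0$: the term indexed by $(k_1,\dots,k_u)$ contributes $(-1)^u\big(\prod_w \beta_{\text{(the $w$-th block)}}\big)\,dX_{j_1\cdots j_{k_0}}$. After applying $L_g^*$, each $\beta$-block factor acquires a sum over how $\mathcal{M}(g)$ eats an initial segment of that block, and the $dX_{j_1\cdots j_{k_0}}$ factor acquires a sum over how $\mathcal{M}(g)$ eats an initial segment of $j_1\cdots j_{k_0}$. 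Expanding everything, one obtains a large sum indexed by decorated compositions of $t$; the key is that this is exactly a telescoping/cancellation identity. I would reorganize the expanded sum by grouping terms according to which "new composition'' of $t$ they realize after the $\mathcal{M}(g)$-bites are absorbed, and check that for every composition other than the trivial one (no bite anywhere, giving back $\gamma_{j_1\cdots j_t}$) the contributions cancel in pairs, the pairing being: a given bite can be attributed either to the end of the preceding block (or to the $dX$ factor) or to the start of the following block, and these two attributions carry opposite signs because they differ by one in the number $u$ of blocks. This is the combinatorial heart of the argument and is the step I expect to be the main obstacle — it is essentially the statement that $\gamma_{j_1\cdots j_t}$ is a cocycle-like "primitive'' built to kill exactly the non-invariant pieces, and making the sign-bookkeeping airtight (especially the $(-1)^u$ weights interacting with the convention $c_\emptyset=1$ and with bites of length zero versus positive length) requires care.

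An alternative, possibly cleaner, route is to avoid the raw combinatorics: observe that an $F/F_k$-invariant $1$-form on $\mathrm{Im}(\mathcal{M}_{\R})$ is the same as a $1$-cocycle on $F/F_k$ under the de Rham–group-cohomology dictionary set up just above the lemma, and that the formula for $\gamma_{j_1\cdots j_t}$ is engineered so that its value, as a group $1$-cochain, is the function $g\mapsto$ (a fixed $\Z$-linear combination of the $c_I(g)$) — in fact one recognizes $\gamma_{j_1\cdots j_t}$ as precisely the $1$-form whose associated $1$-cochain is $c_{j_1\cdots j_t}$ itself once one accounts for the non-linearity of $c_{j_1\cdots j_t}$ via the lower-order correction terms $\prod_w\beta_{(\cdots)}$. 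Under this reading, invariance is automatic: a left-invariant $1$-form on a Lie group always descends to a closed invariant form on the quotient by a lattice, and here the "Lie group'' is the unipotent real form $\mathrm{Im}(\mathcal{M}_{\R})$ with $F/F_k$ sitting as a cocompact lattice. I would therefore phrase the proof as: (1) identify $\gamma_{j_1\cdots j_t}$ with a left-invariant $1$-form on the nilpotent real group $\mathrm{Im}(\mathcal{M}_{\R})$ by checking it is annihilated by the action of right translations — equivalently, that $d\gamma_{j_1\cdots j_t}$ and the interior products with the generating vector fields have constant coefficients — and (2) conclude invariance under the lattice $F/F_k$ as a special case of left-invariance. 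Even on this route, step (1) reduces to the same bite-cancellation computation, so the obstacle is not eliminated, only repackaged; I would present whichever version makes the sign pattern most transparent, and I would double-check the edge cases $t=1$ (where $\gamma_{j_1}=dX_{j_1}=\alpha_{j_1}$, matching the remark preceding the lemma) and $t=2$ by hand before trusting the general identity.
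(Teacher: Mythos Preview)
Your overall strategy---pull back $\gamma_{j_1\cdots j_t}$ under the group action and verify it is unchanged---is correct and is what the paper does. But you are making the computation much harder than necessary, and you also have the side of the action reversed.

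First, the action in the paper is by \emph{right} multiplication, not left: this is visible from the paper's transformation rules $\mathcal{M}(x_h)^*(\beta_{j_1\cdots j_k})=\beta_{j_1\cdots j_k}+\delta_{j_k,h}\beta_{j_1\cdots j_{k-1}}$ and $\mathcal{M}(x_h)^*(dX_{j_1\cdots j_k})=dX_{j_1\cdots j_k}+\delta_{j_k,h}dX_{j_1\cdots j_{k-1}}$, where the ``bite'' is taken from the \emph{end} of the index, not the start. Your formulas $L_g^*(dX_{j_1\cdots j_r})=\sum_a c_{j_1\cdots j_a}(g)\,dX_{j_{a+1}\cdots j_r}$ are for left multiplication. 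This is not fatal---invariance under the whole group is the same either way---but it does not match the paper.

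Second, and more importantly, you propose verifying invariance under an \emph{arbitrary} $g\in F/F_k$, which produces the complicated ``decorated composition'' sum you describe and a nontrivial cancellation argument. The paper's key simplification is to check invariance only on the \emph{generators} $x_1,\dots,x_q$. Since $\mathcal{M}(x_h)=1+X_h$, each pullback has exactly two terms, so after expanding one obtains a sum where every unwanted term appears exactly twice with opposite signs $(-1)^u$ and $(-1)^{u\pm 1}$ (coming from whether the extra $\delta_{j_\bullet,h}$ is absorbed into an adjacent $\beta$-block or creates a new block of length $1$). The telescoping you anticipated then becomes a genuinely elementary pairwise cancellation rather than the multi-length ``bite'' bookkeeping you flagged as the main obstacle. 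Your alternative route via left-invariant forms on the nilpotent real group is conceptually sound but, as you say, reduces to the same calculation; the generator trick is what actually makes that calculation short.
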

In what follows, we denote the sum $k_0 + k_1 + \cdots + k_u $ by $p_u $ for brevity.

\begin{proof}
It is enough to show that, for any $h \leq q$, the pullback $\mathcal{M}(x_h)^* (\gamma_{j_1 \cdots j_t})$ is $\gamma_{j_1 \cdots j_t}$ itself. Notice, by definition, the following pullback formula:
\[\mathcal{M}(x_h)^* (b_{j_1 \cdots j_k})= \beta_{j_1 \cdots j_k} + \delta_{j_k,h}\beta_{j_1 \cdots j_{k-1}} ,\]
\[\mathcal{M}(x_h)^* (d X_{j_1 \cdots j_k}) = d X_{j_1 \cdots j_k} + \delta_{j_k, h}d X_{j_1 \cdots j_{k- 1}}. \]
Thus, the pullback $\mathcal{M}(x_h)^* (\prod_{w= 1}^{ u} \beta_{j_{1+p_{w-1}}j_{2+p_{w-1}} \cdots j_{p_{w}}} d X_{j_1 \cdots j_{k_0}} )$ is formed as
$$ (\prod_{w: \ 1\leq w \leq u} \beta_{j_{1+p_{w-1}} \cdots j_{ p_{w}}} + \delta_{h, j_w} \beta_{j_{1+p_{w-1}} \cdots j_{p_{w} -1}}) (d X_{j_1 \cdots j_{k_0}} + \delta_{h, j_{k_0}} d X_{j_1 \cdots j_{ k_0 -1}}). $$
Denote the coefficients of $d X_{j_1 \cdots j_{k_0}}$ and of $d X_{j_1 \cdots j_{k_0 -1}}$ by $A_{k_0}$ and $B_{k_0}$, respectively. Then, by a careful observation, we can check that
$$(-1)^{u} \sum_{(k_1, \dots, k_u) \in \mathcal{S}_{t,k_0}} (A_{k_0} -B_{k_0+1}) = (-1)^{u} \sum_{(k_1, \dots, k_u) \in \mathcal{S}_{t,k_0}} \beta_{j_{1+p_0} \cdots j_{p_1}} \beta_{j_{1+p_1} \cdots j_{p_2}}
\cdots \beta_{j_{1+ p_{t-1}} \cdots p_t}. $$
Since the sum of the left hand side running over $1 \leq k_0 \leq t$ is equal to $\mathcal{M}(x_h)^* (\gamma_{j_1 \cdots j_t}) $, we have the desired $ \mathcal{M}(x_h)^* (\gamma_{j_1 \cdots j_t}) = \gamma_{j_1 \cdots j_t} $.
\end{proof}

\begin{lem}\label{ma3188}
Fix an index $(j_1 , \dots, j_k)$. Then, for any $s<t \leq k$ with $(s,t)\neq (1,k)$, we have
$$ d \gamma_{j_{s} \cdots j_{t}} = \sum_{r: \ s \leq r \leq t-1} \gamma_{j_{s} \cdots j_{r}} \wedge \gamma_{j_{r+1} \cdots j_{t}} \in \wedge^2 \mathrm{Im} (\mathcal{M}_{\R}) . $$
\end{lem}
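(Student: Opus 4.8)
The plan is to verify the identity by a direct differentiation of the explicit 1-form $\gamma_{j_s\cdots j_t}$ produced in Lemma \ref{maas22}, using only two inputs: the exterior derivative of the coordinate 1-forms $d X_{j_1\cdots j_m}$ of $\R\langle X_1,\dots,X_q\rangle/\mathcal{J}_k$, and the relation $d(dX_{j_1\cdots j_m})=0$. First I would record that, on the affine space $\R\langle X_1,\dots,X_q\rangle/\mathcal{J}_k$, the coordinate functions $\beta_{j_u\cdots j_v}$ are linear, so $d\beta_{j_u\cdots j_v}=dX_{j_u\cdots j_v}$; hence differentiating a typical summand $\bigl(\prod_{w=1}^u\beta_{j_{1+p_{w-1}}\cdots j_{p_w}}\bigr)\,dX_{j_1\cdots j_{k_0}}$ of $\gamma$ produces, by the Leibniz rule, a sum over which of the $u$ blocks gets differentiated, each term being a product of $u$ one-forms $dX$ (the differentiated block, which equals $dX_{j_{1+p_{w-1}}\cdots j_{p_w}}$, wedged with the trailing factor $dX_{j_1\cdots j_{k_0}}$) times $u-1$ remaining $\beta$-coefficients. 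Since the restriction to $\mathrm{Im}(\mathcal{M}_{\R})$ is a pullback, all of this descends to $\wedge^2\mathrm{Im}(\mathcal{M}_{\R})$.

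Next I would compute the right-hand side $\sum_{r=s}^{t-1}\gamma_{j_s\cdots j_r}\wedge\gamma_{j_{r+1}\cdots j_t}$ by expanding each factor according to the formula in Lemma \ref{maas22}. The $\gamma_{j_s\cdots j_r}$ factor contributes a term of the shape $(-1)^a(\prod\beta)\,dX_{j_s\cdots j_{m}}$ for some split point $m$ and some partition of the tail, and similarly for $\gamma_{j_{r+1}\cdots j_t}$; the wedge is a 2-form $dX_{j_s\cdots j_m}\wedge dX_{j_{r+1}\cdots j_{m'}}$ weighted by a product of $\beta$-factors covering the complementary blocks. The key bookkeeping claim is that, after collecting all $(r,\text{partition},\text{partition})$ triples on the right that yield a fixed pair of coordinate 1-forms wedged together, the accumulated $\beta$-weight and sign exactly match what the Leibniz expansion of $d\gamma_{j_s\cdots j_t}$ on the left produces for that same pair. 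Concretely, a single application of $d$ on the left ``splits'' one block into a $dX$, while on the right the outer index $r$ already records a split into two subwords each of which is itself built from a chain of $\beta$-blocks terminating in a $dX$; reconciling these is a matter of reindexing the composition $\mathcal{S}_{r-s+1,\bullet}\times\mathcal{S}_{t-r,\bullet}$ with $\mathcal{S}_{t-s+1,\bullet}$ together with a refinement of the leading block.

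The main obstacle I anticipate is precisely this combinatorial matching of signs and multiplicities: the sign $(-1)^u$ attached to a partition of length $u$ on the left has to be reproduced on the right as a product $(-1)^{u_1}(-1)^{u_2}$ of signs coming from the two factors of the wedge, and one must check no cancellations or double-counting occur when two different $r$'s (or a differentiated block versus a leading-block refinement) give the same 2-form. The cleanest route is probably an induction on $t-s$ (the case $t=s+1$ being immediate since $\gamma_{j_s}=dX_{j_s}$ is closed and the right-hand side is $\gamma_{j_s}\wedge\gamma_{j_{s+1}}$, matching $d\gamma_{j_sj_{s+1}}$ by the one-block Leibniz computation), reducing a length-$(t-s+1)$ partition by peeling off its last block and invoking the inductive identity for the remaining word. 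Once the indexing dictionary between $\mathcal{S}_{t-s+1,k_0}$ and the pairs $(\mathcal{S}_{r-s+1,\bullet},\mathcal{S}_{t-r,\bullet})$ is set up, the verification is mechanical; I would present the length-one and length-two base computations explicitly and then indicate the inductive step, leaving the fully expanded sign ledger to the reader as in the proof of Lemma \ref{maas22}.
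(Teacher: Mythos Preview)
Your proposal is correct and follows essentially the same approach as the paper: differentiate $\gamma_{j_s\cdots j_t}$ term-by-term via the Leibniz rule (using $d\beta_I=dX_I$), expand the right-hand side by the defining formula for each $\gamma$, and then match the two expressions by a reindexing of compositions $\mathcal{S}_{\bullet,\bullet}$. The paper carries out exactly this direct comparison (with the explicit reindexing $r\leftrightarrow k_0$, $k_a'\leftrightarrow k_{a+1}$, $k_a''\leftrightarrow k_{u+a+1}$) rather than packaging it as an induction on $t-s$, but the content is the same; note only the small slip in your description where you wrote ``a product of $u$ one-forms $dX$'' --- each Leibniz term is a wedge of exactly two coordinate 1-forms times $u-1$ surviving $\beta$-factors, as your parenthetical correctly indicates.
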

\begin{proof}
First, by the Leibniz rule, the left hand side $d \gamma_{j_{s} \cdots j_{t}} $ is expressed as
$$ \sum_{k_0=s}^t ( \sum_{(k_1, \dots, k_{u} )\in \mathcal{S}_{t,k_0}} \sum_{v=1}^u (-1)^{u} \beta_{j_{1+p_0} \cdots j_{p_1}} \cdots
\check{\beta}_{j_{1+ p_v} \cdots j_{p_{v+1}}} \cdots \beta_{1+ j_{p_{u-1}} \cdots j_u}
d X_{j_{1+p_u} \cdots j_{p_{u+1}}}) \wedge d X_{j_s \cdots j_{k_0}}. $$
Here, the check $\check{\beta}_{j_{p_v+1} \cdots j_{p_{v+1}}} $ means the elimination of the term $\beta_{j_{p_v+1} \cdots j_{p_{v+1}}} $. On the other hand, the right hand side becomes
\[ \sum_{r: \ s \leq r < t} \Bigl(
\sum_{k_0' : \ s \leq k_0' \leq r} ( \sum_{(k_1', \dots, k_{u-1}')\in \mathcal{S}_{r ,k_0'}} (-1)^{u'} \beta_{j_{1+s} \cdots j_{p_1'}} \cdots \beta_{j_{1+ p_{u-1}'} \cdots j_r}
) d X_{j_s \cdots j_{k_0'}} \]
\[ \ \ \ \ \ \ \ \ \ \ \ \wedge \sum_{k_0'' : \ r+1 \leq k_0''\leq t} ( \sum_{(k_1'', \dots, k_u'') \in \mathcal{S}_{t-r ,k_0''}} (-1)^{u''} \beta_{j_{1+ k_0''} \cdots j_{p_1''}} \cdots \beta_{1 + j_{p_{u-1}''} \cdots j_t}
) d X_{j_{r+1} \cdots j_{k_0''}} \Bigr) . \]
By replacing $r$ by $k_0$, $k_a' $ by $k_{a+1} $ and $k_a''$ by $k_{u+a+1}$, a careful comparison deduces that this sum equal to the preceding expansion of the left hand side.
\end{proof}

This situation is the same as the defining system, as mentioned in \S \ref{Sbra1}. Note that the de Rham theorem preserves the cup product. Thus, in parallel to the main theorem \ref{maainthm}, we readily obtain the basis of the 2-cocycle of $H^2_{\rm dR}( BF/F_k)$ as follows.

\begin{thm}\label{32az}
The second cohomology $H^2_{\rm dR}( BF/F_k) \cong \R^{N_k}$ is spanned by the Massey products $ \langle \alpha_{i_1}, \dots, \alpha_{i_k} \rangle $ running over standard sequences $(i_1 \cdots i_k ) \in \mathfrak{U}_k$. Here, $ \langle \alpha_{i_1}, \dots, \alpha_{i_k} \rangle $ is represented by the 2-form
$$\sum_{k_0 =1}^k ( \sum_{(k_1, \dots, k_u) \in \mathcal{S}_{k ,k_0}}\sum_{v=1}^u (-1)^{k_0} \beta_{j_{1+ k_0} \cdots j_{p_1}} \cdots
\check{\beta}_{j_{1+ p_v} \cdots j_{p_{v+1}}} \cdots \beta_{j_{1+ p_{u-1}} \cdots j_t}
) d X_{j_{1+p_u} \cdots j_{p_{u+1}}}\wedge d X_{j_1 \cdots j_{k_0}}.
$$
\end{thm}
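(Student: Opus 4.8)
\textbf{Proof proposal for Theorem \ref{32az}.}
The plan is to combine the two preceding lemmas with the de Rham theorem in a way that mirrors the algebraic argument for Theorem \ref{maainthm}(I)--(II). First I would note that, by Lemma \ref{maas22}, each $\gamma_{j_s\cdots j_t}$ is an $F/F_k$-invariant $1$-form on $\mathrm{Im}(\mathcal{M}_{\R})$, hence descends to a genuine $1$-cochain on the manifold $B(F/F_k)$ supplied by Theorem \ref{32}; in particular $\gamma_{j_s}=dX_{j_s}$ descends to a closed $1$-form representing $\alpha_{j_s}$, as already recorded after Theorem \ref{32}. Then Lemma \ref{ma3188} says precisely that the family $(a_{s,t}):=(\gamma_{j_s\cdots j_t})$ satisfies condition (iii) of the definition of a defining system in \S\ref{Sbra1}, now in the de Rham complex, with $d$ in place of $\partial^*$ and $\wedge$ in place of $\smile$ (the sign conventions match because all $p_i=1$, so each $(-1)^{p_s+\cdots}$ reduces to a sign that is absorbed by the $\wedge$-anticommutativity). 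Condition (i) is the degree bookkeeping $1+\cdots+1-t+s$, which is $1$, consistent with $\gamma_{j_s\cdots j_t}$ being a $1$-form, and condition (ii) holds by the previous remark. Therefore $(\gamma_{j_s\cdots j_t})$ is a defining system for $(\alpha_{i_1},\dots,\alpha_{i_k})$ in the de Rham complex.

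Next I would feed this defining system into the formula \eqref{kihon4} for the associated Massey cocycle. With $n=k$ and all $p_i=1$, \eqref{kihon4} becomes $\sum_{r=1}^{k-1}(-1)^{r-1+1}\,\gamma_{j_1\cdots j_r}\wedge\gamma_{j_{r+1}\cdots j_k}$, and expanding each factor via the definition of $\gamma$ in Lemma \ref{maas22} and collecting terms by the leading wedge factor $dX_{j_1\cdots j_{k_0}}$ yields exactly the displayed $2$-form in the statement. This is the one genuine computation in the proof; I expect it to be a reindexing exercise of the same flavour as the bookkeeping already carried out in the proof of Lemma \ref{ma3188}, where sums over $\mathcal{S}_{r,k_0'}$ and $\mathcal{S}_{t-r,k_0''}$ were merged into a single sum over $\mathcal{S}_{t,k_0}$. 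The main obstacle is precisely verifying that the signs and the nested summation ranges in \eqref{kihon4} reorganize into the $(-1)^{k_0}$ and the $\check\beta$-omission pattern written in the theorem; I would handle this by the substitution $r\mapsto k_0$, $k_a'\mapsto k_{a+1}$, $k_a''\mapsto k_{u+a+1}$, as in Lemma \ref{ma3188}.

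Finally, to obtain the spanning statement I would invoke Theorem \ref{maainthm}(II): over $\Z$ the classes $\langle\alpha_{i_1},\dots,\alpha_{i_k}\rangle$ for $(i_1\cdots i_k)\in\mathfrak{U}_k$ form a basis of $H^2(F/F_k;\Z)\cong\Z^{N_k}$, so after tensoring with $\R$ and using the natural isomorphism $H^2(F/F_k;\R)\cong H^2_{\rm dR}(B(F/F_k))$ of the de Rham theorem — which, as noted, is compatible with cup/wedge products and hence sends Massey products to Massey products — the same family is an $\R$-basis of $H^2_{\rm dR}(B(F/F_k))\cong\R^{N_k}$. The class $\langle\alpha_{i_1},\dots,\alpha_{i_k}\rangle$ is then represented by the de Rham cocycle produced above, which completes the proof. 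The uniqueness of the Massey product needed to make this assignment well-defined is already guaranteed by Theorem \ref{maainthm}(I) together with Remark \ref{pp2}.
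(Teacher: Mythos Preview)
Your proposal is correct and follows essentially the same approach as the paper: the paper's proof is extremely terse (two sentences), relying on the paragraph preceding the theorem to note that Lemmas \ref{maas22} and \ref{ma3188} place us in the defining-system situation of \S\ref{Sbra1} and that the de Rham isomorphism preserves products, and then simply says one must compute $\sum_{r}\gamma_{j_1\cdots j_r}\wedge\gamma_{j_{r+1}\cdots j_t}$ and observe its $F/F_k$-invariance via Lemma \ref{maas22}. You have unpacked exactly these steps, including the appeal to Theorem \ref{maainthm}(II) for the spanning claim, which the paper leaves implicit in the phrase ``in parallel to the main theorem''.
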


\begin{proof}
This resulting is immediately computed as the sum $\sum_{r=1}^{ t} \gamma_{j_{1} \cdots j_{r}} \wedge \gamma_{j_{r+1} \cdots j_{t}} $ by the definition of the Massey product. Here, we should remark that the sum is $F/F_k$-invariant by Lemma \ref{maas22}.
\end{proof}

As a result for 3-cocycles in $H^3_{\rm dR}(B(F/F_k))$, Theorem \ref{maainthm} (III) and Proposition \ref{maainthm22} enable us to similarly describe 3-cocycles $\mathfrak{s}_{\ell}( dX_j \wedge \langle \alpha_{i_1}, \dots, \alpha_{i_{\ell}} \rangle ) $ as 3-forms, where $ \ell =k,k+1$. Day \cite{Day} considers an extension of the Morita homomorphism
from differential 3-forms of $B (F/F_k) $; thus, it seems interesting to observe the work from the viewpoint of Theorem \ref{32az}. 

Finally, we conclude this appendix by describing some examples with $ t \leq 4$.
\begin{exa}\label{m3231}
\begin{enumerate}[(i)]
\item The 1-form $\gamma_{ab}$ is $dX_{ab} - \beta_a dX_b $. Hence, the Massey product $ \langle \alpha_a, \alpha_b, \alpha_c \rangle =\gamma_{ab} \wedge \gamma_c + \gamma_{a} \wedge \gamma_{bc}$ is expressed as $dX_{a} \wedge d X_{bc} + dX_{ab} \wedge d X_c - \beta_a dX_b d X_c - \beta_b d X_a dX_c. $
\item Next, when $t=3$, the 1-form $\gamma_{abc}$ is $ dX_{abc} - \beta_c dX_{ab} - \beta_{b}\beta_{c} dX_{a}+ \beta_{bc} dX_{a}$. Hence, the Massey product $ \langle \alpha_a, \alpha_b, \alpha_c, \alpha_d \rangle $ is formulated as
\[ (dX_{abc} - \beta_c dX_{ab} - \beta_{b}\beta_{c} dX_{a}+ \beta_{bc} dX_{a})\wedge dX_d + (dX_{ab} - \beta_a dX_b)\wedge (dX_{cd} - \beta_c dX_d) \]
$$ \ \ \ \ \ \ \ \ + \beta_a \wedge(dX_{bcd} - \beta_d dX_{bc} - \beta_{c}\beta_{d} dX_{b}+ \beta_{cd} dX_{b}). $$
\item Next, when $t=4$ and $(j_1,j_2,j_3,j_4)=(a,b,c,d)$, the 1-form $\gamma_{abcd}$ is
$$ dX_{abcd} - \beta_d dX_{abc} + \beta_{c}\beta_{d} dX_{ab}+ \beta_{cd} dX_{ab}
- \beta_{bcd} dX_{a} -\beta_{bc}\beta_{d} dX_{a}- \beta_b \beta_{cd} dX_{a}- \beta_b \beta_{c} \beta_{d} dX_{a} . $$
\noindent
Then, $ \langle \alpha_a, \alpha_b, \alpha_c, \alpha_d , \alpha_e \rangle $ can be similarly computed as $\gamma_{abcd}\wedge  \gamma_e + \gamma_{abc}\wedge  \gamma_{de} +\gamma_{ab} \wedge \gamma_{cde} +\gamma_{a}\wedge  \gamma_{bcde}$.
\end{enumerate}
\end{exa}

\vskip 1pc

\normalsize
DEPARTMENT OF
MATHEMATICS
TOKYO
INSTITUTE OF
TECHNOLOGY
2-12-1
OOKAYAMA
, MEGURO-KU TOKYO
152-8551 JAPAN

\end{document}